\newtheorem{lemma}{Lemma}
\newtheorem{theorem}{Theorem}
\newtheorem{assumption}{Assumption}
\title{\LARGE \bf A New Family of Feasible Methods for Distributed Resource Allocation}
\author{Xuyang Wu, Sindri Magn\'{u}sson, and Mikael Johansson\thanks{X. Wu and M. Johansson are with the Division of Decision and Control Systems, School of Electrical Engineering and Computer Science, KTH Royal Institute of Technology, SE-100 44 Stockholm, Sweden. Email: {\tt {\{xuyangw,mikaelj\}@kth.se}.}}
\thanks{S. Magn\'{u}sson is with the Department of Computer and System Science, Stockholm University, SE-164 07 Stockholm, Sweden. Email: {\tt sindri.magnusson@dsv.su.se}.}
\thanks{This work was supported in part by the funding from Digital Futures and in part by the Swedish Research Council (Vetenskapsr\r{a}det) under grant 2020-03607.}}
\begin{document}

	\maketitle
	
	\begin{abstract}
	Distributed resource allocation is a central task in network systems such as smart grids, water distribution networks, and urban transportation systems. When solving such problems in practice it is often important to have non-asymptotic feasibility guarantees for the iterates, since over-allocation of resources easily causes systems to break down. In this paper, we develop a distributed resource reallocation algorithm where every iteration produces a feasible allocation. The algorithm is fully distributed in the sense that nodes communicate only with neighbors over a given communication network. We prove that under mild conditions the algorithm converges to a point arbitrarily close to the optimal resource allocation. Numerical experiments demonstrate the competitive practical performance of the algorithm.
	\end{abstract}

	% =============================================================================================================================
	%
	% Introduction
	%
	% =============================================================================================================================
\section{Introduction}

This paper develops distributed mechanisms for resource allocation in a network of cooperating agents, with applications in smart grids, water distribution networks, and urban transportation systems.
Although a large number of distributed resource allocation algorithms have been proposed~\cite{Aybat16,Nedic17,XiaoL06b,Necoara13,Ghadimi13,Ho80,Magnusson16,Magnusson18}, most of them do not guarantee feasibility of iterates~\cite{Aybat16,Nedic17} or do not allow for local constraints~\cite{Ghadimi13,XiaoL06b,Necoara13}. However, many practical problems, such as economic dispatch, have local constraints and require that iterates are feasible at all times to avoid system breakdown.

Feasible methods for distributed resource allocation that allow for local constraints include \cite{Ho80,Magnusson16,Magnusson18}, among which \cite{Ho80} addresses the problem with one global linear equality constraint while \cite{Magnusson16,Magnusson18} consider the problem with one global linear inequality constraint. In addition, \cite{Ho80,Magnusson16,Magnusson18} can only deal with problems with one-dimensional local decision variables, \cite{Ho80} requires the local constraints to be the set of non-negative real numbers, and \cite{Magnusson16,Magnusson18} rely on star networks.

Motivated by the lack of distributed feasible methods that can solve a broader range of optimal resource allocation problems, we design a distributed resource reallocation algorithm (DRRA), which allows \emph{multiple global inequality and equality coupling constraints and local constraints on multi-dimensional local variables, still guarantees that the iterates are feasible at all times.} In addition, DRRA can be implemented on general undirected, connected networks, while \cite{Magnusson16,Magnusson18} can only handle star networks.

The outline of this paper is as follows: Section \ref{sec:probformuandtrans} formulates the problem and introduces a problem transformation, which facilitates the development of DRRA detailed in Section \ref{sec:Algorithm}. Section \ref{sec:convergence} analyses the convergence properties of DRRA and Section \ref{sec:numericalexample} evaluates its practical performance in simulations. Finally, Section \ref{sec:conclusion} concludes the paper.
\subsection*{Notation}

For any set $\mathcal{S}\subseteq \mathbb{R}^n$, $|\mathcal{S}|$ represents its cardinality. We use $\log$ to denote the natural logarithm and $\|\cdot\|$ the $\ell_2$ vector norm. We say that a function $f:\mathbb{R}^d\rightarrow \mathbb{R}\cup\{+\infty\}$ is smooth on a set $X\subseteq \mathbb{R}^d$ if it is differentiable on $X$ and its gradient $\nabla f$ is Lipschitz continuous on $X$, i.e., there exists $L\ge 0$ such that $\|\nabla f(x)-\nabla f(y)\|\le L\|x-y\|$ $\forall x,y\in X$. In addition, $\mathbf{0}$ is the all-zero vector with proper dimension.

\section{Problem Formulation and Transformation}\label{sec:probformuandtrans}
This section formulates the problem, introduces a problem transformation for algorithm development in Section \ref{sec:Algorithm}, and provides two motivating examples.

\subsection{Problem Formulation}

Consider a network of nodes $\mathcal{V}=\{1,\cdots,n\}$ that communicate over an undirected connected graph $\mathcal{G}=(\mathcal{V}, \mathcal{E})$ induced by the edge set $\mathcal{E}\subseteq \{\{i,j\}:i,j\in\mathcal{V}, i\ne j\}$. The goal of the agents is to communicate with their network  neighbors to find the optimal resource allocation:
\begin{equation}\label{eq:prob}
\begin{split}
\underset{x_i\in\mathbb{R}^{d_i},~i\in\mathcal{V}}{\operatorname{minimize}}~&\sum_{i\in\mathcal{V}} f_i(x_i)\\
\operatorname{subject~to}~&\sum_{i\in\mathcal{V}} A_i^{\text{in}}x_i\le b^{\text{in}},\\
&\sum_{i\in\mathcal{V}} A_i^{\text{eq}}x_i = b^{\text{eq}},\\
& x_i \in \mathcal{X}_i,~~\forall i\in \mathcal{V},
%&g_i(x_i)\le \mathbf{0},~\forall i=1,\ldots,n}
\end{split}
\end{equation}
where $f_i:\mathbb{R}^{d_i}\rightarrow\mathbb{R}$, $A_i^{\text{in}}\in\mathbb{R}^{m^{\text{in}}\times d_i}$, $A_i^{\text{eq}}\in\mathbb{R}^{m^{\text{eq}}\times d_i}$, and $\mathcal{X}_i\subseteq \mathbb{R}^{d_i}$. Moreover, $b^{\text{in}}\in\mathbb{R}^{m^{\text{in}}}$ and $b^{\text{eq}}\in\mathbb{R}^{m^\text{eq}}$, where $m^{\text{in}}$ and $m^{\text{eq}}$ are the total number of inequality and equality constraints, respectively.
%
%Problem \eqref{eq:prob} 
%
For our theoretical analyses we will impose the following assumption.
\begin{assumption}\label{asm:prob}
	The following holds for problem \eqref{eq:prob}.
	\begin{enumerate}[(a)]
		\item Each $f_i$ is convex and twice-continuously differentiable on $\mathcal{X}_i$.
		Moreover, we can write the local constraint as 
		$$\mathcal{X}_i=\{x\in \mathbb{R}^{d_i}:~g_i^j(x)\leq 0, j=1,\cdots, p_i\}, $$
		where $g_i^j: \mathbb{R}^{d_i}\rightarrow \mathbb{R}$ is convex and twice-continuously differentiable for $j=1,\ldots,p_i$.
		%		\item Each $f_i$ and $g_i^{\ell}$,  on $\tilde{\mathcal{X}}_i$.
		%		\item Each $\nabla f_i$ is Lipschitz continuous with Lipschitz constant $L>0$.
		\item There exist $\tilde{x}_i\in \mathbb{R}^{d_i}$ $\forall i\in\mathcal{V}$ such that $g_i^{j}(\tilde{x}_i)<0$ for all $i\in \mathcal{V}$  and $j=1,\ldots,p_i$, and 
		$\sum_{i\in\mathcal{V}} A_i^{\text{in}}x_i\le b^{\text{in}}$  and $\sum_{i\in\mathcal{V}} A_i^{\text{eq}}x_i = b^{\text{eq}}$.

		\item The feasible set of problem \eqref{eq:prob} is compact.
	
		\item Each $A_i:=[(A_i^{\text{in}})^T, (A_i^{\text{eq}})^T]^T\in\mathbb{R}^{(m^{\text{in}}+m^{\text{eq}})\times d_i}$ has full row rank, i.e., $\operatorname{rank}(A_i)=m^{\text{in}}+m^{\text{eq}}$ $\forall i\in\mathcal{V}$.

	\end{enumerate}
\end{assumption}
In Assumption \ref{asm:prob}, conditions (a)--(b) are standard in the literature and allow us to study the convergence of our algorithm using the KKT conditions. In particular, condition (b) is similar to Slater's condition, commonly used in constrained convex optimization. Condition (c) is satisfied if either $\mathcal{X}_i$ is compact for all $i\in \mathcal{V}$ or the combination of $\mathcal{X}_i$ and the coupling constraints define a compact set.  For example, if 
	\begin{equation}\label{eq:examplecompactconstraints}
	\sum_{i\in\mathcal{V}} x_i\le b,~x_i\ge \mathbf{0},~\forall i\in\mathcal{V}.
	\end{equation}
Condition (d) is needed in our analysis to ensure the convergence of the proposed algorithm. %Condition (d) is satisfied for both motivating examples in Section~\ref{sec:ME}. \textcolor{red}{should we delete this sentence because it is placed before the two motivating examples}

\subsection{Problem Transformation Using Barrier Function}\label{ssec:problembarriertran}

 The main challenge in our algorithm development lies in ensuring the feasibility of the coupling constraints at every iteration.   To deal with the local constraint sets $\mathcal{X}_i$, we  use barrier functions \cite{Bertsekas99}. To that end, we consider the following transformed version of problem \eqref{eq:prob}:
% 
% 
%The barrier function \cite{Bertsekas99} is used to eliminate inequality constraints in optimization problems, with which we transform problem \eqref{eq:prob} into
\begin{equation}\label{eq:barrierprob}
\begin{split}
\underset{x_i\in \tilde{\mathcal{X}}_i,i\in\mathcal{V}}{\operatorname{minimize}}~~&\sum_{i\in\mathcal{V}} F_i(x_i):=(f_i(x_i)+c\sum_{j=1}^{p_i}B_i^j(x_i))\\
\operatorname{subject~to}~&\sum_{i\in\mathcal{V}} A_i^{\text{in}}x_i\le b^{\text{in}},\\
&\sum_{i\in\mathcal{V}} A_i^{\text{eq}}x_i = b^{\text{eq}},
\end{split}
\end{equation}
where $c>0$, $\tilde{\mathcal{X}}_i=\{x\in\mathbb{R}^{d_i}:~g_i^j(x)<0, j=1,\ldots,p_i\}$ is the interior of $\mathcal{X}_i$, and $B_i^j(x_i):\{x\in\mathbb{R}^{d_i}:g_i^j(x)<0\}\rightarrow\mathbb{R}$ is a barrier function on the constraint $g_i^j(x_i)\le 0$. We require each $B_i^j$, $i\in\mathcal{V}$, $j=1,\ldots,p_i$ satisfy the following conditions under Assumption \ref{asm:prob}:
\begin{enumerate}[(a)]
	\item If $g_i^j(x_i)$ goes to $0$ from negative values, then $B_i^j(x_i)$ goes to $+\infty$.
	\item Each $B_i^j$ is convex and twice-continuously differentiable on $\tilde{\mathcal{X}}_i$.
\end{enumerate}
Clearly, if Assumption \ref{asm:prob} holds, then each $\tilde{\mathcal{X}}_i$ is convex and each $F_i$ is convex and twice continuously differentiable on $\tilde{\mathcal{X}}_i$. Moreover, the solution to problem~\eqref{eq:barrierprob} is an approximate solution to problem~\eqref{eq:prob} that converges to the true solution as $c$ goes to $0$. 
Two standard barrier functions satisfying these  conditions are
%\begin{itemize}
	%\item 
	a) the logarithmic barrier function $B_i^j(x_i)=-\log(-g_i^j(x_i))$ and
	%\item 
	b) the inverse barrier function $B_i^j(x_i)=-{1}/{g_i^j(x_i)}$.
%\end{itemize}

\subsection{Motivating Example} \label{sec:ME}
 We now illustrate two problems in smart-grids where feasible methods might be needed. Neither problem can be handled by existing feasible methods. 
\subsubsection{Economic dispatch \cite{Yang13}}\label{sssec:ed}
%\added{
Consider a smart-grid with several users, where some users can generate power and some users have flexible demands.  
The users wish to find in cooperation the optimal power allocations:
\begin{equation}\label{eq:ED}
\begin{aligned}
& \underset{x_i\in \mathbb{R},i\in\mathcal{V}}{\text{minimize}}
& & \sum_{i\in\mathcal{V}} f_i(x) \\
& \text{subject to} 
& & \sum_{i\in\mathcal{V}} x_i = b,\\
&&& x_i\in \mathcal{X}_i,~~i \in \mathcal{V}
\end{aligned}
\end{equation}
%}
where $x_i\in \mathbb{R}$ is the power injection of node $i\in\mathcal{V}$ (where negative injection $x_i<0$ means that node $i$ consumes power from the grid). The objective functions $f_i(x_i)$ are generation costs or the disutility related to shifting power demands. Here $b\in \mathbb{R}$ is the power injection not accounted for by the nodes in $\mathcal{V}$. The local constraints $\mathcal{X}_i$ are important. They specify hard constraints on devices or user preferences. In practice, $\mathcal{X}_i$ is almost always compact. The coupling constraint ensures that the supply meets the demand, which is a hard physical constraint in power systems. 

Existing distributed feasible methods \cite{XiaoL06b,Necoara13,Ghadimi13,Ho80,Magnusson16,Magnusson18} cannot handle this problem. The work in \cite{XiaoL06b,Necoara13,Ghadimi13} cannot handle local constraints and \cite{Ho80} requires $\mathcal{X}_i$ to be the set of non-negative real numbers. These works cannot handle compact local constraints by introducing barrier penalty function in $f_i(\cdot)$, since they require  $f_i(\cdot)$ to be smooth on $\mathcal{X}_i$. On the other hand, we do not require smoothness on $f_i(\cdot)$ and can thus handle local constraints using barrier functions. Moreover, the development in \cite{XiaoL06b,Necoara13,Ghadimi13,Ho80} relies on the fact that the coupling constraint holds with equality, whereas we can easily deal with both equality and inequality coupling constraints.
The works in~\cite{Magnusson16, Magnusson18} require a star communication network, whereas we can deal with general communication topology. Moreover,~\cite{Magnusson16, Magnusson18} cannot handle a coupling constraint that holds with equality.    %important variant of is where the coupling constraint is inequality constraint, the 
%Suppose there are $n$ power generators in a power network, and the total power demand is $D$. Each power generator $i$ has a 

\subsubsection{Multiple Resources \cite{Enyioha18}}\label{sssec:edmultiple}
 In smart grids, users might get power from different sources, e.g., renewable or coal. To model such scenarios, we need to consider multiple coupling constraints, e.g., as in the following example:
\begin{equation}\label{eq:problemmultiple}
\begin{aligned}
& \underset{x_i\in \mathbb{R}^2, i\in\mathcal{V}}{\text{minimize}}
& & \sum_{i\in\mathcal{V}} f_i(x_i^{\text{renew}},x_i^{\text{coal}})\\
& \text{subject to}
& & \sum_{i\in\mathcal{V}} x_i^{\text{renew}} = 0,
~~\sum_{i\in\mathcal{V}} x_i^{\text{coal}} = 0,\\
&&& x_i\in \mathcal{X}_i,~i\in\mathcal{V},
\end{aligned}
\end{equation}
where the local objective functions might, e.g., encode user preferences for different energy sources. Existing feasible methods cannot handle this problem or, in general, problems with multiple resources. Our distributed feasible approach (presented in the next section) can solve both problems above.

\section{Algorithm}\label{sec:Algorithm}

This section develops the DRRA algorithm which solves problem \eqref{eq:prob}, based on the transformation in Section \ref{ssec:problembarriertran}.

\subsection{Algorithm Idea: Right-Hand Side Allocation}% Allocation}

 Our algorithm builds on the principle of decomposition by right-hand side allocation \cite{Bertsekas99}. This is a primal decomposition method that splits  problem~\eqref{eq:barrierprob} into multiple smaller problems, one for each node in $\mathcal{V}$. 
 %
 %
 %
 %This is a primal decomposition technique that has been used to develop distributed algorithms. We show that, when applying this decomposition technique  to problem \eqref{eq:barrierprob} in the right way then we can ensure that the algorithm is  feasible at every iteration.  %We illustrate the steps of Decomposition by right-hand side allocation below.
 %
% When applying it to problem \eqref{eq:barrierprob}, the decomposition procedure includes the following two steps.
%
%\textbf{Step 1:} 
%
%The key idea  of Decomposition by right-hand side allocation is 
The main idea is to introduce auxiliary variables 
$y_i^{\text{in}}\in \mathbb{R}^{m^{\text{in}}}$ and $y_i^{\text{eq}}\in \mathbb{R}^{m^{\text{eq}}}$, one for each coupling constraint. Problem \eqref{eq:barrierprob} can then be written equivalently as follows: 
\begin{equation}\label{eq:probxyz}
\begin{split}
\underset{x_i\in \tilde{\mathcal{X}}_i,y_i\in\mathbb{R}^m,i\in\mathcal{V}}{\operatorname{minimize}} ~&~~\sum_{i\in \mathcal{V}} F_i(x_i)\\
\operatorname{subject~to}\quad&~~
A_i^{\text{in}}x_i\le y_i^{\text{in}},~\forall i\in\mathcal{V},\\ 
&~~ A_i^{\text{eq}}x_i=y_i^{\text{eq}},~\forall i\in\mathcal{V},\\
&~~ \sum_{i\in\mathcal{V}} y_i = b,
\end{split}
\end{equation}
where $m=m^{\text{in}}+m^{\text{eq}}$, $b=[(b^{\text{in}})^T, (b^{\text{eq}})^T]^T\in\mathbb{R}^m$, and $y_i=[(y_i^{\text{in}})^T,(y_i^{\text{eq}})^T]^T\in\mathbb{R}^m$.
We can express our problem in terms of the primal functions $\phi_i:\mathbb{R}^m\rightarrow\mathbb{R}\cup\{+\infty\}$ for each node $i\in\mathcal{V}$, defined as the optimal value of the following problem
\begin{equation} \label{eq:subproblem}
\begin{aligned}
& \underset{x_i\in \tilde{\mathcal{X}}_i}{\text{minimize}}
& & F_i(x_i) \\
& \text{subject to}
& & A_i^{\text{in}}x_i\le y_i^{\text{in}},\\
&&& A_i^{\text{eq}}x_i=y_i^{\text{eq}},
\end{aligned}
\end{equation}
for a given right-hand side vector $y_i$.
%
%Let $\mathcal{X}_i(y_i)$ and $\mathcal{X}_i^{\star}(y_i)$ be, respectively, the set of feasible and optimal solution to problem ...,
%
%
%
%For each $i\in \mathcal{V}$, we define the primal function $\phi_i:\mathbb{R}^m\rightarrow \mathbb{R}$ as follows:
%$$\phi_i(y_i)= \inf \left\{ F_i(x_i) \Big| x_i\in \tilde{\mathcal{X}}_i,~A_i^{\text{in}}x_i\le y_i^{\text{in}}, A_i^{\text{eq}}x_i=y_i^{\text{eq}} \right\}.$$
%
%\underset{x_i\in \tilde{\mathcal{X}}_i }{\text{min}} %$$
%
%
%\begin{align}
%& C_i(y_i,z_i)=\{x_i: x_i\in \tilde{\mathcal{X}}_i, g_i(x_i)\le y_i, A_ix_i=z_i\},%\forall y_i\in\mathbb{R}^p, \forall z_i\in\mathbb{R}^m, \forall i\in\mathcal{V}
%\end{align}
Note that problem \eqref{eq:subproblem} is convex for all $i\in\mathcal{V}$, see~\cite[Section 5.6.1]{Boyd04}, and that problem \eqref{eq:probxyz} can be equivalently phrased as the minimization of the sum of the primal functions
\begin{equation}\label{eq:probleminyz}
\begin{split}
\underset{y_i\in\mathbb{R}^m,i\in\mathcal{V}}{\operatorname{minimize}}~&~\sum_{i\in\mathcal{V}}~\phi_i(y_i)\\
\operatorname{subject~to} &~ \sum_{i\in\mathcal{V}} y_i=b.
\end{split}
\end{equation}
 We say that $\mathbf{y}=[y_1^T,\ldots,y_n^T]^T\in\mathbb{R}^{nm}$ is feasible to problem \eqref{eq:probleminyz} if $\sum_{i\in\mathcal{V}} y_i=b$ and $\phi_i(y_i)<+\infty$ $\forall i\in\mathcal{V}$.
  If $\mathbf{y}$ is feasible to problem \eqref{eq:probleminyz}, then by letting $x_i$ be an optimal solution of \eqref{eq:subproblem} for all $i\in\mathcal{V}$, the point $\mathbf{x}=[x_1^T, \ldots, x_n^T]^T$ is feasible to problems \eqref{eq:prob} and \eqref{eq:barrierprob} (cf. Lemma \ref{lemma:feasibilityofy} in the appendix). The key idea we use to ensure feasibility of our algorithm is to update $x_i$ and the auxiliary variable $y_i$ in a way that guarantees feasibility of problem~\eqref{eq:probleminyz}.  %We illustrate this algorithm now. 
 %
% \added{The key idea for the transformation is that if $\mathbf{y}$ is feasible to problem \eqref{eq:probleminyz}, then by letting each $x_i$ being an optimal solution of \eqref{eq:subproblem} for all $i\in\mathcal{V}$, the iterate $\mathbf{x}=[x_1^T, \ldots, x_n^T]^T$ will be feasible to problems \eqref{eq:prob} and \eqref{eq:barrierprob} (cf. Lemma \ref{lemma:feasibilityofy}). The feasibility of problem \eqref{eq:probleminyz} is intuitively easier to guarantee than that of problems \eqref{eq:prob} and \eqref{eq:barrierprob}, because it includes only equality constraints.}
 
% \textcolor{red}{Sindri: Maybe we  can provide some intuition why this helps us in deriving a feasible algorithm.}

\subsection{Distributed Resource Reallocation Algorithm (DRRA)}\label{ssec:DRRA}

%{\color{teal}
%\begin{itemize}
%    \item At some point in the manuscript, we should also discuss the implications of these assumptions to the solution of practical economic dispatch problems.
%    \item I am not sure that we need to be very specific about the initialization, unless you use it in the analysis (will check). Because there are many ways of getting a feasible $x$, and one would probably use a problem-specific approach; and it is not clear why we cannot start the algorithm from a feasible solution $x$ to (3), and the associated feasible $y_i$'s (because if we cannot deal with this iterate, how can we guarantee that this iterate does not occur later?)
%    \item for the multi-node selection rule, I think you should phrase it in terms of cliques \url{https://en.wikipedia.org/wiki/Clique\_(graph\_theory)}
%    \item I also believe that we should re-write Algorithm~1 a little. First of all, ``who'' is running Algorithm? Is it an agent or a global entity? Because ``select $i_k$'' sounds like some global entity chooses $i_k$? It would be nicer to re-write the algorithm from a single node perspective. On a similar token, how do nodes $j\in {\mathcal N}_{i_k}$ that they should send their $y$ values to $i_k$? 
%\end{itemize}
%}

%This subsection describes DRRA for solving problems \eqref{eq:barrierprob} and \eqref{eq:prob}.
%\textcolor{red}{Should we change the algorithm name to distributed feasible resource allocation algorithm (DFRAA) to emphasize the 'distributed' and 'feasible' feature of the algorithm? In addition, do you have any preference on 'reallocation' or 'allocation' in the algorithm name?}
We now describe our proposed algorithm in detail.
%novel distributed feasible resource reallocation algorithm. 
To each node $i\in\mathcal{V}$ we associate iterates $x_i^k$ and $y_i^k:=[(y_i^{\text{in},k})^T,(y_i^{\text{eq},k})^T]^T$, where $k$ is the iteration index. 
%
%where $y_i^{\text{in},k}\in\mathbb{R}^{m^{\text{in}}}$, $y_i^{\text{eq},k}\in\mathbb{R}^{m^{\text{eq}}}$, and $k\in \mathbb{N}$ is the iteration index. 
 Moreover, each node $i$ has access to the objective function $F_j$ and constraint matrix $A_j$ for all $j\in {\mathcal N}_i\cup \{i\}$. Requiring objective functions of neighboring nodes is not rare in distributed optimization, such as the gossip method in \cite{Lu11}.
 %as well as the local objective function $F_j$ and the constraint matrix $A_j$ of itself $j=i$ and all its neighbors $j\in\mathcal{N}_i$.

The first step of the algorithm is initialization. The initial iterate $\mathbf{y}^0=[(y_1^0)^T, \ldots, (y_n^0)^T]^T$ can be any feasible solution to~\eqref{eq:probleminyz}, and each $x_i^0$ can be chosen as an optimal solution to problem \eqref{eq:subproblem} with $y_i=y_i^0$ for all $i\in\mathcal{V}$. In Lemma \ref{lemma:welldefinedalg} in the next section, we will show the existence of $x_i^0$ $\forall i\in\mathcal{V}$.

%\begin{align}
%	& (y_i^0)^{\text{in}}=A_i^{\text{in}}\tilde{x}_i^0-\frac{(\sum_{\ell\in\mathcal{V}} A_\ell^{\text{in}} \tilde{x}_\ell^0)-b^{\text{in}}}{n},\label{eq:yinitializationin}\\
%	& (y_i^0)^{\text{eq}}=A_i^{\text{eq}}\tilde{x}_i^0.\label{eq:yinitializationeq}
%\end{align}
%
%
%\textcolor{red}{Sindri: The notation $(y_i^k)^{\text{in}}$ is a bit heavy, how about what I did here below? I am still 100\% happy, but it is the best I could come up with.}

After the initialization, the nodes execute the following iterative process. At each iteration $k\ge 0$, a distributed algorithm is used to form an update set $U_k \subseteq {\mathcal V}$, such that
\begin{equation}\label{eq:nonconflict}
    (\mathcal{N}_i\cup\{i\})\cap(\mathcal{N}_j\cup\{j\})=\emptyset, \forall i,j\in U_k, i\ne j.
\end{equation}
Then, each $i\in U_k$ first finds $x_j^{k+1}$ $\forall j\in\mathcal{N}_i\cup\{i\}$ by solving
%from the following problem:
\begin{equation}\label{eq:xupdateprob}
		\begin{split}
			\underset{\substack{x_j\in \tilde{\mathcal{X}}_j,\\j\in\mathcal{N}_i\cup\{i\}}}{\operatorname{minimize}}~&~\sum_{j\in\mathcal{N}_i\cup\{i\}}F_j(x_j)\\
		\operatorname{subject~to}&
		\sum_{j\in\mathcal{N}_i\cup\{i\}}\!\!A_j^{\text{in}}x_j\le\sum_{j\in\mathcal{N}_i\cup\{i\}}y_j^{\text{in},k},\\
		&\sum_{j\in\mathcal{N}_i\cup\{i\}}\!\!A_j^{\text{eq}}x_j=\sum_{j\in\mathcal{N}_i\cup\{i\}}y_j^{\text{eq},k},
	\end{split}
\end{equation}
and sets $y_j^{k+1}$ $\forall j\in\mathcal{N}_i\cup\{i\}$ as
\begin{align}
	&y_j^{\text{in},k+1}\!=\!A_j^{\text{in}}x_j^{k+1}\!+\!\frac{1}{|\mathcal{N}_i|\!+\!1}\!\sum_{\ell\in\mathcal{N}_i\cup\{i\}}\!\!(y_\ell^{\text{in},k}\!-\!A_\ell^{\text{in}}x_\ell^{k+1}),\label{eq:yupdatein}\\
	& y_j^{\text{eq},k+1}=A_j^{\text{eq}}x_j^{k+1}.\label{eq:yupdateeq}
\end{align}
By treating each $y_i^k$ as the resource held by node $i$, the update at each step can be viewed as a resource reallocation between nodes $j\in \mathcal{N}_i\cup\{i\}$ for all $i\in U_k$. We can prove (Lemma~\ref{lemma:welldefinedalg} in the next section) that the optimal set of problem \eqref{eq:xupdateprob} is non-empty for any $k\ge 0$ and $i\in U_k$.

A detailed description of DRRA is given in Algorithm \ref{alg:DRRA}.
{
	\renewcommand{\baselinestretch}{1.05}
	\begin{algorithm} [!htb]
		\caption{\small Distributed Resource Reallocation Algorithm (DRRA)}
		\label{alg:DRRA}
		\begin{algorithmic}[1]
			\small
			\STATE \textbf{Initialization:}
			\STATE All the nodes cooperatively choose a feasible solution $\mathbf{y}^0=[(y_1^0)^T, \ldots, (y_n^0)^T]^T$ of problem \eqref{eq:probleminyz} (cf. Section \ref{ssec:initializationy}).
			\STATE Each node $i\in\mathcal{V}$ sets $x_i^0$ as an optimal solution of problem \eqref{eq:subproblem} with $y_i=y_i^0$.
			\FOR{$k=0,1,\ldots$}
			\STATE Form update set $U_k$ satisfying \eqref{eq:nonconflict}  (cf. Section \ref{ssec:distributedselection}).
			\FOR{each node $i\in\mathcal{V}$}
            \IF{$i\in U_k$}
            \STATE retrieve $y_j^k$ from all neighbors $j\in\mathcal{N}_i$.
            \STATE determine $x_j^{k+1}$ $\forall j\in\mathcal{N}_i\cup\{i\}$ by solving \eqref{eq:xupdateprob}.
            \STATE compute $y_j^{k+1}$ $\forall j\in\mathcal{N}_i\cup\{i\}$ according to \eqref{eq:yupdatein}--\eqref{eq:yupdateeq}.
            \STATE send $x_j^{k+1}$ and $y_j^{k+1}$ to every neighbor $j\in\mathcal{N}_i$.
            \ELSIF{$i\in\cup_{j\in U_k}\mathcal{N}_j$}
            \STATE respond to the request for $y_i^k$ from $j\in\mathcal{N}_i\cap U_k$.
            \STATE receive $x_i^{k+1}$ and $y_i^{k+1}$ from $j\in\mathcal{N}_i\cap U_k$.
            \ELSE
                \STATE set $x_i^{k+1}=x_i^k$ and $y_i^{k+1}=y_i^k$.
            \ENDIF
        \ENDFOR
%			\STATE Each node $i\in\mathcal{V}-(\mathcal{N}_{i_k}\!\cup\{i_k\})$ sets $x_i^{k+1} = x_i^k,~y_i^{k+1}=y_i^k$.
			\ENDFOR
		\end{algorithmic}
	\end{algorithm}
}

All the steps in Algorithm \ref{alg:DRRA} are distributed, except for the initialization of $\mathbf{y}^0$ and the formation of $U_k$, whose distributed implementation will be discussed in Section \ref{ssec:initializationy} and Section \ref{ssec:distributedselection}, respectively. The communication at each iteration $k\ge 0$ occurs at steps 5,8,11,13,14, including the distributed formation of $U_k$ and the transmissions of $y_j^k$, $x_j^{k+1}$, and $y_j^{k+1}$ between each $i\in U_k$ and its neighbors $j\in\mathcal{N}_i$.

\subsection{Initialization of $\mathbf{y}^0$}\label{ssec:initializationy}

Finding a feasible $\mathbf{y}^0$ in a distributed fashion is usually easy. In Lemma \ref{lemma:feasibilityofy} in the appendix we will show that $\mathbf{y}^0$ is feasible to \eqref{eq:probleminyz} if the following two conditions hold: 1) $\sum_{i\in\mathcal{V}} y_i^0=b$ and 2) problem \eqref{eq:subproblem} with $y_i=y_i^0$ is feasible for all $i\in\mathcal{V}$. These two conditions can be satisfied in a distributed way for many problems. For example, for the constraint in \eqref{eq:examplecompactconstraints}, we can simply let $y_i^0=b/n\ge 0$ $\forall i\in\mathcal{V}$, where $b/n$ is either known, or can be calculated by a distributed consensus scheme (e.g., \cite{ChenJY06}).

The initialization can be further simplified if the global inequality constraint in \eqref{eq:prob} is absent. In this case, suppose each node $i\in\mathcal{V}$ knows some $\tilde{x}_i^0\in\mathbb{R}^{d_i}$ such that $\tilde{\mathbf{x}}^0=[(\tilde{x}_1^0)^T, \ldots, (\tilde{x}_n^0)^T]^T$ is feasible to problem \eqref{eq:barrierprob}, which is easier to find than $\mathbf{y}^0$ because the closed form of problem \eqref{eq:barrierprob} is known while that of each $\phi_i$ is difficult to acquire in general. Then, we can let $y_i^0=A_i\tilde{x}_i^0$ $\forall i\in\mathcal{V}$. This strategy is fully decentralized.

\begin{comment}
A global term $\frac{(\sum_{\ell\in\mathcal{V}} A_\ell^{\text{in}}\tilde{x}_\ell^0)-b^{\text{in}}}{n}$. Provided that each node $i\in\mathcal{V}$ knows $\tilde{x}_i^0$ and $b_i^{\text{in}}$, where $b_i^{\text{in}}$ $\forall i\in\mathcal{V}$ satisfy $\sum_{i\in\mathcal{V}} b_i^{\text{in}}=b^{\text{in}}$. The global term can be calculated in a distributed fashion by standard consensus schemes (e.g., \cite{ChenJY06}).
\end{comment}

\subsection{Distributed Formation of the Update Set  $U_k$}\label{ssec:distributedselection}

The problem of forming an update set $U_k$ which satisfies~\eqref{eq:nonconflict} is similar to contention-resolution in wireless networks and can be solved using a number of different decentralized algorithms. Examples include 
%Decentralized formation of $U_k$ satisfying 
%Decentralizedly selecting $U_k$ satisfying \eqref{eq:nonconflict} %is not challenging, as multiple ways have been proposed and %analysed in the literature, e.g., 
the asynchronous Poisson clock model in \cite{Bankhamer18} and the distributed random scheduling method in \cite{Rhee09}. In addition, we propose the following voting-based selection procedure with low communication overhead.
%Below, we propose a vote-based selection way with low communication cost.

\textbf{Voting-based selection}: Each node $i\in\mathcal{V}$ first draws a real number $v_i^k$ from the uniform distribution on the interval $[0,1]$, and then votes for the node $j\in\mathcal{N}_i\cup\{i\}$ such that $v_j^k = \min\{v_\ell^k: \ell\in\mathcal{N}_i\cup\{i\}\}$. Without loss of generality, we assume $v_j^k$, $j\in\mathcal{N}_i\cup\{i\}$ are distinct for all $i\in\mathcal{V}$. If node $i\in\mathcal{V}$ gets all the votes from $\mathcal{N}_i\cup\{i\}$, then $i\in U_k$. In this selection strategy, the communication cost only includes the broadcasting of $v_i^k$ from each $i\in\mathcal{V}$ to $j\in\mathcal{N}_i$ and the transmission of each node's voting decision.

\section{Convergence Analysis}\label{sec:convergence}
This section theoretically analyses the convergence properties of DRRA. We assume all the $x_i^k$'s and $y_i^k$'s in this section are generated by Algorithm \ref{alg:DRRA} without further mention.

\subsection{Main Result}\label{ssec:mainresult}

To present the convergence results, we make the following assumption on the selection of $U_k$.

\begin{assumption}\label{asm:coordinateselection}
For any $i\in \mathcal{V}$ and $k\ge 0$, $i\in U_k$ with a probability $p_i^k\ge\tilde{p}$ for some $\tilde{p}>0$.
\end{assumption}
The voting-based selection method introduced in Section \ref{ssec:distributedselection} satisfies Assumption \ref{asm:coordinateselection} with $\tilde{p}=1/n$.

The following theorem establishes the convergence of $x_i^k$, $i\in\mathcal{V}$ with respect to problems \eqref{eq:barrierprob} and \eqref{eq:prob}.
\begin{theorem}\label{theo:optimality}
	Suppose Assumptions \ref{asm:prob}--\ref{asm:coordinateselection} hold and that $\{ y_i^0\}_{i\in {\mathcal V}}$ is feasible to \eqref{eq:probleminyz}. Then, $\{x_i^k\}_{i\in {\mathcal V}}$ is feasible to problems \eqref{eq:barrierprob} and \eqref{eq:prob} for all $k\ge 0$. In addition,
	\begin{enumerate}
	    \item with probability $1$,
	\begin{equation*}%\label{eq:convergex}
	    \lim_{k\rightarrow\infty} \sum_{i\in\mathcal{V}} F_i(x_i^k) = F^\star,
	\end{equation*}
	where $F^\star$ is the optimal value of problem \eqref{eq:barrierprob}.
	\item for any $\epsilon>0$, there exists $\bar{c}>0$ such that when $c\in (0,\bar{c}]$, the following holds with probability $1$,
	\begin{equation*}%\label{eq:thmprobconv}
	    \lim_{k\rightarrow \infty}\sum_{i\in\mathcal{V}} f_i(x_i^k)\le f^\star+\epsilon,
	\end{equation*}
	where $f^\star$ is the optimal value of problem \eqref{eq:prob}.
	\end{enumerate}
\end{theorem}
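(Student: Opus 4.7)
The plan is to prove the three claims in sequence. Claim~(1) is a direct induction on $k$. For Claim~(2) I would recast DRRA as an exact randomized block-coordinate Gauss--Seidel method on the reduced problem~\eqref{eq:probleminyz}, establish monotone Lyapunov descent of $V_k:=\sum_{i\in\mathcal{V}} F_i(x_i^k)=\sum_{i\in\mathcal{V}}\phi_i(y_i^k)$, and identify the limit via the KKT system of~\eqref{eq:probleminyz}. Claim~(3) then follows by combining Claim~(2) with standard barrier-method sensitivity, namely that the optimal value $F^\star$ of~\eqref{eq:barrierprob} tends to $f^\star$ as $c\downarrow 0$.

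\textbf{Feasibility.} The base case holds by the initialization of $\mathbf{y}^0$ and $x_i^0$. For the inductive step, whenever $i\in U_k$, problem~\eqref{eq:xupdateprob} enforces the block-aggregate coupling constraints on $\mathcal{N}_i\cup\{i\}$, after which~\eqref{eq:yupdatein}--\eqref{eq:yupdateeq} redistribute the block slack equally so that (a) the per-node relations $A_j^{\text{in}}x_j^{k+1}\le y_j^{\text{in},k+1}$ and $A_j^{\text{eq}}x_j^{k+1}=y_j^{\text{eq},k+1}$ hold for every $j\in\mathcal{N}_i\cup\{i\}$, and (b) $\sum_{j\in\mathcal{N}_i\cup\{i\}}y_j^{k+1}=\sum_{j\in\mathcal{N}_i\cup\{i\}}y_j^k$. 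The non-conflict condition~\eqref{eq:nonconflict} and the no-op rule for nodes outside $\bigcup_{i\in U_k}(\mathcal{N}_i\cup\{i\})$ propagate these relations globally, preserving $\sum_{i\in\mathcal{V}}y_i^{k+1}=b$. Summing the per-node constraints over $i$ then yields feasibility of $\{x_i^{k+1}\}$ for both~\eqref{eq:barrierprob} and~\eqref{eq:prob}.

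\textbf{Convergence to $F^\star$.} The crux is the identity that solving~\eqref{eq:xupdateprob} and then applying~\eqref{eq:yupdatein}--\eqref{eq:yupdateeq} is \emph{equivalent} to the block minimization
\[
\min_{(y_j)_{j\in\mathcal{N}_i\cup\{i\}}}\;\sum_{j\in\mathcal{N}_i\cup\{i\}}\phi_j(y_j)\quad\text{s.t.}\quad\sum_{j\in\mathcal{N}_i\cup\{i\}}y_j=\sum_{j\in\mathcal{N}_i\cup\{i\}}y_j^k,
\]
which I would verify by expanding $\phi_j$ via its $(x_j,y_j)$ definition and showing that $\sum_{j}\phi_j(y_j^{k+1})=\sum_{j} F_j(x_j^{k+1})$ (bracketing from above via $\phi_j(y_j^{k+1})\le F_j(x_j^{k+1})$ and from below via feasibility of any $\phi_j$-minimizer for~\eqref{eq:xupdateprob}). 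This gives the monotone descent $V_{k+1}\le V_k$, and by Assumption~\ref{asm:prob}(c) the iterates lie in a compact subset of $\prod_i\tilde{\mathcal{X}}_i$ bounded away from its boundary, so $V_k$ converges a.s.\ to some $V_\infty\ge F^\star$ and an a.s.\ convergent subsequence yields a limit $(\mathbf{x}^\infty,\mathbf{y}^\infty)$. Assumption~\ref{asm:coordinateselection} together with Borel--Cantelli ensures every $i\in\mathcal{V}$ lies in $U_k$ infinitely often w.p.\ 1; combined with $V_k-V_{k+1}\to 0$ and exact block-minimality, this forces the block-KKT condition $\nabla\phi_j(y_j^\infty)=\mu_i^\infty$ to hold simultaneously for all $j\in\mathcal{N}_i\cup\{i\}$ and every $i\in\mathcal{V}$. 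Connectivity of $\mathcal{G}$ then stitches the local multipliers into a single $\lambda^\infty$ with $\nabla\phi_i(y_i^\infty)=\lambda^\infty$ for every $i$, i.e., the KKT conditions of the convex problem~\eqref{eq:probleminyz}, so $V_\infty=F^\star$.

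\textbf{From $F^\star$ to $f^\star+\epsilon$, and main obstacle.} Because each $B_i^j$ is continuous on $\tilde{\mathcal{X}}_i$ and the iterates stay in a compact subset of its interior (by monotone decrease of $V_k$ and the barrier property), there is a finite constant $C$ with $\sum_{i,j}B_i^j(x_i^k)\ge -C$ for every $k$. Hence $\sum_i f_i(x_i^k)=V_k-c\sum_{i,j}B_i^j(x_i^k)\le V_k+cC$, and passing to the limit using Claim~(2) gives $\limsup_k\sum_i f_i(x_i^k)\le F^\star+cC$. Standard barrier-method sensitivity~\cite{Bertsekas99} gives $F^\star\downarrow f^\star$ as $c\downarrow 0$, so choosing $\bar c$ small enough that $(F^\star-f^\star)+cC\le\epsilon$ on $(0,\bar c]$ closes the argument. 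I expect the main obstacle to be the limit identification in Claim~(2): turning infinite-frequency selection of every $i\in U_k$ and the vanishing per-iteration decrement into simultaneous block-KKT conditions. This requires differentiability of each $\phi_i$ on the interior of its effective domain, which I would derive from Assumptions~\ref{asm:prob}(a),(b),(d) via standard perturbation/envelope results, together with a stochastic descent lemma lower-bounding $\mathbb{E}[V_k-V_{k+1}\mid\mathcal{F}_k]$ by a measure of block-suboptimality, so that summability of the expected decrements forces that measure to vanish in the limit.
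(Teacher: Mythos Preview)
Your proposal is correct and follows essentially the same route as the paper: feasibility by induction on the $y$-update, the block-minimization equivalence between~\eqref{eq:xupdateprob}--\eqref{eq:yupdateeq} and the $\phi_j$-subproblem, a supermartingale/summable-decrement argument to drive every block suboptimality to zero, differentiability of $\phi_i$ via uniqueness of the geometric multiplier (this is where Assumption~\ref{asm:prob}(d) enters), connectivity of $\mathcal{G}$ to globalize the KKT identity, and barrier sensitivity for Claim~(3). The paper commits to the supermartingale route you sketch at the end rather than the Borel--Cantelli/subsequential variant, and it explicitly invokes continuity of the block optimal-value function $\Psi_i$ to pass the vanishing block suboptimality through the subsequential limit; you will want that continuity step as well, since ``$i\in U_k$ infinitely often'' together with $V_k-V_{k+1}\to 0$ does not by itself force simultaneous block optimality at a limit point.
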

\begin{proof}
	See Section \ref{ssec:preliminary}.
\end{proof}

\subsection{Proof of Theorem \ref{theo:optimality}}\label{ssec:preliminary}

This subsection details the proof of Theorem \ref{theo:optimality}. We start by showing that problem \eqref{eq:subproblem} with $y_i=y_i^0$ and problem \eqref{eq:xupdateprob} are both solvable, so that Algorithm \ref{alg:DRRA} is well-defined.
\begin{lemma}\label{lemma:welldefinedalg}
	Suppose that Assumption \ref{asm:prob} holds and that $\{ y_i^0\}_{i\in {\mathcal V}}$ is feasible to \eqref{eq:probleminyz}. Then, the optimal set of problem \eqref{eq:subproblem} with $y_i=y_i^0$ for all $i\in\mathcal{V}$ and the optimal set  of problem \eqref{eq:xupdateprob} for all $i\in U_k$ and all $k\geq 0$ are  non-empty.
\end{lemma}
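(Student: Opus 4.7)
The plan is to prove both existence claims by the same recipe: establish (i) non-emptiness of the feasible region, (ii) boundedness of the feasible region via Assumption~\ref{asm:prob}(c), and (iii) an extended lower semicontinuity property for $F_i$ on $\mathcal{X}_i$ coming from the barrier. Since the second claim (for \eqref{eq:xupdateprob}) must hold at every iteration, I will also need a running invariant which I propose to prove by induction on $k$: for all $k\ge 0$, the iterate $\mathbf{y}^k$ is feasible to \eqref{eq:probleminyz}, and $(x_j^k)_{j\in\mathcal{V}}$ satisfies the per-node constraint $A_j^{\text{in}}x_j^k\le y_j^{\text{in},k}$, $A_j^{\text{eq}}x_j^k=y_j^{\text{eq},k}$.

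For the base case $k=0$: feasibility of $\mathbf{y}^0$ is assumed; and $x_i^0$ is, by construction, an optimal (hence feasible) solution of \eqref{eq:subproblem} at $y_i=y_i^0$. Its existence is the first half of the lemma and is handled below. For the inductive step, fix $i\in U_k$. Equation \eqref{eq:yupdateeq} gives $A_j^{\text{eq}}x_j^{k+1}=y_j^{\text{eq},k+1}$ by definition, and \eqref{eq:yupdatein} together with the inequality constraint in \eqref{eq:xupdateprob} gives
\[
y_j^{\text{in},k+1}-A_j^{\text{in}}x_j^{k+1}=\tfrac{1}{|\mathcal{N}_i|+1}\sum_{\ell\in\mathcal{N}_i\cup\{i\}}(y_\ell^{\text{in},k}-A_\ell^{\text{in}}x_\ell^{k+1})\ge 0,
\]
which preserves the invariant on $\mathcal{N}_i\cup\{i\}$; nodes outside $\bigcup_{i\in U_k}(\mathcal{N}_i\cup\{i\})$ are unchanged, and the non-conflict property \eqref{eq:nonconflict} ensures no node is touched twice. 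Summing \eqref{eq:yupdatein}--\eqref{eq:yupdateeq} over $\mathcal{N}_i\cup\{i\}$ also shows $\sum_\ell y_\ell^{k+1}=\sum_\ell y_\ell^k$, so $\sum_{i\in\mathcal{V}}y_i^{k+1}=b$, closing the induction.

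Granted the invariant, both existence arguments are identical in spirit. For \eqref{eq:subproblem} at $y_i=y_i^0$, feasibility of $\mathbf{y}^0$ to \eqref{eq:probleminyz} means $\phi_i(y_i^0)<+\infty$, so there is $\hat{x}_i\in\tilde{\mathcal{X}}_i$ with $A_i^{\text{in}}\hat{x}_i\le y_i^{\text{in},0}$ and $A_i^{\text{eq}}\hat{x}_i=y_i^{\text{eq},0}$. For boundedness, observe that any such candidate $x_i$ can be paired with $\hat{x}_j$ for $j\ne i$ (with $\hat x_j$ obtained analogously from feasibility of $y_j^0$) to yield a feasible point of \eqref{eq:prob}: the per-node inequalities add up to $\sum_j A_j^{\text{in}}x_j\le\sum_j y_j^{\text{in},0}=b^{\text{in}}$, and similarly for equality. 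By Assumption~\ref{asm:prob}(c) the coordinate projection of the feasible set of \eqref{eq:prob} is bounded, so the feasible set of \eqref{eq:subproblem} is bounded; being the intersection of a closed half-space system with $\mathcal{X}_i$, its closure in $\mathbb{R}^{d_i}$ is compact. The argument for \eqref{eq:xupdateprob} at iteration $k$ is identical: the tuple $(x_j^k)_{j\in\mathcal{N}_i\cup\{i\}}$ is feasible by the invariant, and any feasible tuple can be combined with $(x_\ell^k)_{\ell\notin\mathcal{N}_i\cup\{i\}}$ to form a point feasible to \eqref{eq:prob}, so boundedness again follows from Assumption~\ref{asm:prob}(c).

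What I expect to be the main obstacle is attaining the minimum despite the domain $\tilde{\mathcal{X}}_i$ being open: a minimizing sequence could a priori drift to the boundary $\{g_i^j=0\}$. The remedy is the barrier property (a): extend $F_i$ by $+\infty$ outside $\tilde{\mathcal{X}}_i$; then $F_i$ is lower semicontinuous on all of $\mathbb{R}^{d_i}$, and its sublevel sets inside the closed-and-bounded feasible region of the relaxed problem (over $\mathcal{X}_i$) are compact. A standard Weierstrass argument then extracts a convergent subsequence of any minimizing sequence whose limit must lie in $\tilde{\mathcal{X}}_i$ (else the limiting $F_i$ value would be $+\infty$, contradicting boundedness of $F_i$ along the sequence) and achieves the minimum. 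Convexity is not needed for existence but will be convenient downstream. This yields non-emptiness of the optimal sets for both problems.
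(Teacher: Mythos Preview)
Your proposal is correct and follows essentially the same route as the paper: an induction on $k$ showing that $\{y_i^k\}$ remains feasible to \eqref{eq:probleminyz} (with $x_j^k$ feasible to the per-node problem), combined with an existence argument that uses (i) a feasible point, (ii) boundedness via embedding into the global feasible set of \eqref{eq:prob} and Assumption~\ref{asm:prob}(c), and (iii) the barrier blow-up to prevent minimizing sequences from escaping $\tilde{\mathcal{X}}_i$. The only cosmetic difference is packaging: the paper abstracts your Weierstrass/LSC argument into a standalone lemma (Lemma~\ref{lemma:solutionexistence}) and the ``pair with other nodes'' boundedness trick into Lemma~\ref{lemma:feasibilityofy}, whereas you carry both out inline.
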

\begin{proof}
	See Appendix \ref{ssec:proofoflemmawelldefined}.
\end{proof}

Lemma \ref{lemma:nonemptysolutionset} below establishes the convergence of $\{y_i^k\}_{i\in\mathcal{V}}$ with respect to problem \eqref{eq:probleminyz}, which, together with the results in Lemma \ref{lemma:equaloptvalue}, ensures the first result in Theorem \ref{theo:optimality}.
\begin{lemma}\label{lemma:nonemptysolutionset}
	Suppose all the conditions in Lemma \ref{lemma:welldefinedalg} hold. Then
	\begin{enumerate}[(i)]
		\item for any $k\ge 0$, $\{ y_i^k \}_{i\in {\mathcal V}}$ is feasible to problem \eqref{eq:probleminyz}.
		\item the optimal set of problem \eqref{eq:probleminyz} is non-empty.
		\item if Assumption \ref{asm:coordinateselection} also holds, then with probability $1$,
		%the following holds:
		\begin{equation*}
		\lim_{k\rightarrow\infty}\sum_{i\in\mathcal{V}}\phi_i(y_i^k)=\Phi^\star,%\label{eq:monotonicdescentoff}
		\end{equation*}
		where $\Phi^\star$ is the optimal value of problem \eqref{eq:probleminyz}.
	\end{enumerate} 
\end{lemma}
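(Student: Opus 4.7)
My plan is to handle the three parts in order. Part (i) is an induction on $k$; part (ii) constructs an optimal $y^\star$ from a minimizer of problem \eqref{eq:barrierprob}; part (iii) combines a deterministic monotone descent with a randomized argument that exploits the lower bound on coordinate probabilities in Assumption \ref{asm:coordinateselection}.

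For (i), the base case is given. For the inductive step, the non-conflict property \eqref{eq:nonconflict} makes the blocks $B_i := \mathcal{N}_i \cup \{i\}$, $i \in U_k$, pairwise disjoint, so I reason block by block. Within a block I verify $\sum_{j \in B_i} y_j^{k+1} = \sum_{j \in B_i} y_j^k$ by direct computation using \eqref{eq:yupdatein}--\eqref{eq:yupdateeq}: the equality part follows from the equality constraint in \eqref{eq:xupdateprob}, the inequality part from a telescoping in \eqref{eq:yupdatein}. Combined with the unchanged $y_j$'s outside $\bigcup_{i \in U_k} B_i$, this gives $\sum_i y_i^{k+1} = b$. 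To get $\phi_j(y_j^{k+1}) < +\infty$, I exhibit $x_j^{k+1}$ as a feasible point of \eqref{eq:subproblem} at $y_j = y_j^{k+1}$: the equality holds by \eqref{eq:yupdateeq}, and the inequality $A_j^{\text{in}} x_j^{k+1} \le y_j^{\text{in},k+1}$ follows from \eqref{eq:yupdatein} and the nonnegativity of the per-block aggregate slack $\sum_{\ell \in B_i}(y_\ell^{\text{in},k} - A_\ell^{\text{in}} x_\ell^{k+1})$, which is a consequence of the inequality constraint in \eqref{eq:xupdateprob}.

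For (ii), I first show \eqref{eq:barrierprob} has a minimizer $x^\star$ with value $F^\star$: its feasible set is nonempty by Assumption \ref{asm:prob}(b), lies in the compact feasible set of \eqref{eq:prob} (Assumption \ref{asm:prob}(c)), and $F$ blows up on the boundary of $\prod_i \tilde{\mathcal{X}}_i$, so every sublevel set of $F$ is compact and the infimum is attained. I then take $y_i^{\text{eq},\star} = A_i^{\text{eq}} x_i^\star$ and $y_i^{\text{in},\star} = A_i^{\text{in}} x_i^\star + (b^{\text{in}} - \sum_\ell A_\ell^{\text{in}} x_\ell^\star)/n$; this makes $y^\star$ feasible for \eqref{eq:probleminyz} with $\phi_i(y_i^\star) \le F_i(x_i^\star)$, so $\sum_i \phi_i(y_i^\star) \le F^\star$. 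The reverse bound $\sum_i \phi_i(y_i) \ge F^\star$ for every feasible $y$ follows by stitching subproblem minimizers into a feasible point of \eqref{eq:barrierprob}, hence $y^\star$ is optimal and $\Phi^\star = F^\star$.

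For (iii), I first establish the deterministic descent $\sum_j \phi_j(y_j^{k+1}) \le \sum_j \phi_j(y_j^k)$: within each updated block, $\sum_{j \in B_i} \phi_j(y_j^{k+1}) \le \sum_{j \in B_i} F_j(x_j^{k+1})$ by (i), and the right-hand side equals the optimal value of \eqref{eq:xupdateprob}, which is at most $\sum_{j \in B_i} \phi_j(y_j^k)$ since the $\phi_j(y_j^k)$-minimizers form a feasible point of \eqref{eq:xupdateprob}. Thus $\sum_j \phi_j(y_j^k)$ decreases monotonically and, being bounded below by $\Phi^\star$ from (ii), converges almost surely to some random $\Phi^\infty \ge \Phi^\star$. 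To upgrade to $\Phi^\infty = \Phi^\star$ a.s., the plan is to define the per-block gap $d_i(\mathbf{y}^k)$ as the descent achievable by minimizing \eqref{eq:xupdateprob} starting from $\mathbf{y}^k$, bound the conditional expected decrease by $\tilde{p} \sum_i d_i(\mathbf{y}^k)$ via Assumption \ref{asm:coordinateselection}, and apply a Robbins--Siegmund-type argument to conclude $d_i(\mathbf{y}^k) \to 0$ a.s. for every $i$. Accumulation points of $\{\mathbf{y}^k\}$ (which live in a compact set by boundedness arguments analogous to (ii)) are then block-optimal for every $B_i$, and a KKT analysis shows that adjacent blocks enforce a common first-order multiplier along any edge; by connectedness of $\mathcal{G}$, a single global multiplier emerges, giving the KKT conditions of \eqref{eq:probleminyz}. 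The hardest step is this last propagation: it requires lower-semicontinuity of the block-gap functions $d_i$ (despite possible nonsmoothness of $\phi_i$ in $y_i^{\text{in}}$), a clean compactness argument for $\{\mathbf{y}^k\}$, and careful handling of the overlap structure of the blocks so that the randomized updates really do force global optimality in the limit.
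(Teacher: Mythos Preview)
Your proposal is correct and follows essentially the same route as the paper: induction for (i), the construction $y_i^{\text{in},\star}=A_i^{\text{in}}x_i^\star+(b^{\text{in}}-\sum_\ell A_\ell^{\text{in}}x_\ell^\star)/n$, $y_i^{\text{eq},\star}=A_i^{\text{eq}}x_i^\star$ for (ii), and for (iii) a supermartingale/Robbins--Siegmund argument on the block gaps $d_i$ followed by a compactness--plus--connectedness passage to global optimality. One remark on the step you flag as hardest: the paper dispatches your regularity concern directly rather than working with lower semicontinuity or subdifferentials. It shows that under Assumption~\ref{asm:prob}(d) (full row rank of $A_i$) the geometric multiplier of \eqref{eq:subproblem} is unique, so $\phi_i$ is genuinely differentiable with $\nabla\phi_i(y_i)=-u_i^\star(y_i)$; it also proves the block optimal value $\Psi_i$ is continuous. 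With these in hand, $d_i(\mathbf{y}^k)\to 0$ and continuity give $d_i(\tilde{\mathbf{y}})=0$ at any accumulation point $\tilde{\mathbf{y}}$, and the equal-gradient condition $\nabla\phi_j(\tilde{y}_j)=\nabla\phi_\ell(\tilde{y}_\ell)$ for $j,\ell\in\mathcal{N}_i\cup\{i\}$ then propagates along the connected graph to yield optimality of $\tilde{\mathbf{y}}$ in \eqref{eq:probleminyz}, exactly as you outline.
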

\begin{proof}
	See Appendix \ref{ssec:proofoflemmanonempy}.
\end{proof}

%Next, we establish the equivalence between problems \eqref{eq:probleminyz} and \eqref{eq:barrierprob}, study the relationship between the iterates $y_i^k$, $i\in\mathcal{V}$ and $x_i^k$, $i\in\mathcal{V}$, and prove the feasibility of $x_i^k$, $i\in\mathcal{V}$ to problems \eqref{eq:barrierprob} and \eqref{eq:prob}.

\begin{lemma}\label{lemma:equaloptvalue}
	Suppose that all the conditions in Theorem \ref{theo:optimality} hold. Then
	%, the following holds:
	\begin{enumerate}[(i)]
		\item the optimal values of problems \eqref{eq:barrierprob} and \eqref{eq:probleminyz} are identical, i.e., $\Phi^\star=F^\star$.
		\item for any $k\ge 0$, $\sum_{i\in\mathcal{V}}\phi_i(y_i^k)=\sum_{i\in\mathcal{V}}F_i(x_i^k)$.
		\item for any $k\ge 0$, $\{ x_i^k\}_{i\in \mathcal{V}}$ is feasible to \eqref{eq:barrierprob} and \eqref{eq:prob}.
	\end{enumerate}
\end{lemma}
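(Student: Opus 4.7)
The three claims are interrelated: part (iii) uses the fact that each $x_i^k$ solves the subproblem \eqref{eq:subproblem} with $y_i=y_i^k$, which is the content of (ii); and both (i) and (ii) rest on the primal decomposition identity between \eqref{eq:barrierprob} and $\sum_i\phi_i(y_i)$. I would therefore prove them in the order (i), (ii), (iii).

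\textbf{Part (i).} I would establish $\Phi^\star=F^\star$ by a direct two-sided comparison exploiting the equivalence of \eqref{eq:barrierprob} and \eqref{eq:probxyz}. For the direction $F^\star\le\Phi^\star$: given any $\mathbf{y}$ feasible to \eqref{eq:probleminyz}, Lemma~\ref{lemma:welldefinedalg} produces an optimizer $x_i^\star(y_i)$ of \eqref{eq:subproblem}, and summing the per-node constraints $A_i^{\text{in}}x_i^\star\le y_i^{\text{in}}$ and $A_i^{\text{eq}}x_i^\star=y_i^{\text{eq}}$ together with $\sum_i y_i=b$ shows that $\{x_i^\star(y_i)\}$ is feasible to \eqref{eq:barrierprob} with cost $\sum_i\phi_i(y_i)$. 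For the reverse direction, take any $\{x_i^\star\}$ feasible to \eqref{eq:barrierprob} and redistribute the inequality slack $b^{\text{in}}-\sum_i A_i^{\text{in}}x_i^\star\ge \mathbf{0}$ arbitrarily among nodes, e.g., set $y_i^{\text{in}}=A_i^{\text{in}}x_i^\star+(b^{\text{in}}-\sum_\ell A_\ell^{\text{in}}x_\ell^\star)/n$ and $y_i^{\text{eq}}=A_i^{\text{eq}}x_i^\star$. Then $\{y_i\}$ is feasible to \eqref{eq:probleminyz} and $x_i^\star$ is feasible to the corresponding \eqref{eq:subproblem}, hence $\phi_i(y_i)\le F_i(x_i^\star)$.

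\textbf{Part (ii).} This is the main obstacle, and I would attack it by induction on $k$, with the stronger inductive hypothesis that $x_i^k$ is an optimizer of \eqref{eq:subproblem} with $y_i=y_i^k$ (so $\phi_i(y_i^k)=F_i(x_i^k)$). The base case $k=0$ is built into the initialization of Algorithm~\ref{alg:DRRA}. For the inductive step, only nodes in $\bigcup_{i\in U_k}(\mathcal{N}_i\cup\{i\})$ change, and for inactive nodes the hypothesis carries over trivially. For $i\in U_k$ and $j\in\mathcal{N}_i\cup\{i\}$ I would first verify feasibility of $x_j^{k+1}$ for \eqref{eq:subproblem} at $y_j^{k+1}$: \eqref{eq:yupdateeq} gives the equality directly, and \eqref{eq:yupdatein} yields
\[
y_j^{\text{in},k+1}-A_j^{\text{in}}x_j^{k+1}=\tfrac{1}{|\mathcal{N}_i|+1}\sum_{\ell\in\mathcal{N}_i\cup\{i\}}\!\bigl(y_\ell^{\text{in},k}-A_\ell^{\text{in}}x_\ell^{k+1}\bigr)\ge\mathbf{0},
\]
where the nonnegativity follows from the group inequality in \eqref{eq:xupdateprob}. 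Optimality is then shown by contradiction: suppose some $\tilde{x}_j\in\tilde{\mathcal{X}}_j$ is feasible for \eqref{eq:subproblem} at $y_j^{k+1}$ with $F_j(\tilde{x}_j)<F_j(x_j^{k+1})$. Replacing only the $j$-th component in $(x_\ell^{k+1})_{\ell\in\mathcal{N}_i\cup\{i\}}$ by $\tilde{x}_j$ preserves the equality constraint in \eqref{eq:xupdateprob} because $A_j^{\text{eq}}\tilde{x}_j=y_j^{\text{eq},k+1}=A_j^{\text{eq}}x_j^{k+1}$, while for the inequality the new aggregate equals
\[
\sum_{\ell\ne j}A_\ell^{\text{in}}x_\ell^{k+1}+A_j^{\text{in}}\tilde{x}_j\le\tfrac{|\mathcal{N}_i|}{|\mathcal{N}_i|+1}\sum_{\ell}A_\ell^{\text{in}}x_\ell^{k+1}+\tfrac{1}{|\mathcal{N}_i|+1}\sum_\ell y_\ell^{\text{in},k}\le\sum_\ell y_\ell^{\text{in},k},
\]
a convex combination bounded by the right-hand side of \eqref{eq:xupdateprob}. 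This yields a strictly better feasible point for \eqref{eq:xupdateprob}, contradicting the choice of $(x_\ell^{k+1})$. Hence $\phi_j(y_j^{k+1})=F_j(x_j^{k+1})$, completing the induction and giving the per-iteration identity.

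\textbf{Part (iii).} This is now immediate. By Lemma~\ref{lemma:nonemptysolutionset}(i) every $\mathbf{y}^k$ is feasible to \eqref{eq:probleminyz}, and by (ii) each $x_i^k$ solves \eqref{eq:subproblem} at $y_i=y_i^k$; applying Lemma~\ref{lemma:feasibilityofy} of the appendix then yields feasibility of $\{x_i^k\}$ for both \eqref{eq:barrierprob} and \eqref{eq:prob}. The delicate step is clearly (ii), as it requires carefully exploiting both the form of the $y$-update rules \eqref{eq:yupdatein}--\eqref{eq:yupdateeq} and the local optimality of \eqref{eq:xupdateprob} to rule out improvement at any individual node.
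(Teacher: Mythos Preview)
Your proposal is correct and follows essentially the same route as the paper: part (i) is the two-sided comparison that the paper records as assertions (a)--(b) in the proof of Lemma~\ref{lemma:nonemptysolutionset}(ii) (with the identical slack-redistribution $y_i^{\text{in}}=A_i^{\text{in}}x_i^\star+(b^{\text{in}}-\sum_\ell A_\ell^{\text{in}}x_\ell^\star)/n$), and part (iii) is the same appeal to Lemma~\ref{lemma:nonemptysolutionset}(i) and Lemma~\ref{lemma:feasibilityofy}. For part (ii) both arguments prove by induction that $x_j^{k+1}$ is optimal for \eqref{eq:subproblem} at $y_j^{k+1}$; the paper does this via the sandwich $\sum_j\phi_j(y_j^{k+1})\ge\sum_jF_j(x_j^{k+1})$ (from optimality of \eqref{eq:xupdateprob}) together with the pointwise inequality $\phi_j(y_j^{k+1})\le F_j(x_j^{k+1})$, whereas you obtain it by a direct contradiction that feeds a hypothetical improver $\tilde{x}_j$ back into \eqref{eq:xupdateprob}---the two arguments are equivalent, and your convex-combination bound for the aggregate inequality is exactly what the averaging rule \eqref{eq:yupdatein} is designed to deliver. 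One minor correction: in part (i) the existence of $x_i^\star(y_i)$ for a general feasible $\mathbf{y}$ comes from Lemma~\ref{lemma:feasibilityofy}, not Lemma~\ref{lemma:welldefinedalg}.
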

\begin{proof}
	See Appendix \ref{ssec:proofoflemmaequivalence}.
\end{proof}

It is straightforward to see that the first result in Theorem \ref{theo:optimality} follows from Lemmas \ref{lemma:nonemptysolutionset} -- \ref{lemma:equaloptvalue}.

Finally, we bound the difference between the objective error of $\{x_i^k\}_{i\in{\mathcal V}}$ with respect to problems \eqref{eq:prob} and \eqref{eq:barrierprob}.
\begin{lemma}\label{lemma:optimality}
	Suppose that all the conditions in Lemma \ref{lemma:welldefinedalg} hold. Then, for any $\epsilon>0$, there exists $\bar{c}>0$ such that when $c\in (0,\bar{c}]$
	\begin{align}
	    &\sum_{i\in\mathcal{V}} f_i(x_i^k)-f^\star\le \sum_{i\in\mathcal{V}} F_i(x_i^k)-F^\star+\epsilon\label{eq:gap}
	\end{align}
	holds for every $k\ge 0$.
\end{lemma}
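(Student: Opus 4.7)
The plan is to rearrange the claim using $F_i = f_i + c \sum_j B_i^j$: inequality \eqref{eq:gap} is algebraically equivalent to
\[
F^\star - f^\star \le c \sum_{i\in\mathcal{V}} \sum_{j=1}^{p_i} B_i^j(x_i^k) + \epsilon.
\]
I would establish this by bounding $F^\star - f^\star$ from above by $\epsilon/2$ using a standard barrier-method estimate, and $c \sum_{i,j} B_i^j(x_i^k)$ from below by $-\epsilon/2$ using compactness of the feasible set, both for $c$ sufficiently small.

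For the barrier-method estimate, let $x^\star$ be an optimizer of \eqref{eq:prob} (which exists by Assumption \ref{asm:prob}(a)--(c)) and $\tilde{x}$ the Slater point from Assumption \ref{asm:prob}(b). Define $x_\delta := (1-\delta)\,x^\star + \delta\,\tilde{x}$ for $\delta\in(0,1]$. Convexity of each $g_i^j$ together with $g_i^j(x_i^\star)\le 0$ yields $g_i^j(x_{i,\delta}) \le \delta\, g_i^j(\tilde{x}_i) < 0$, and the coupling constraints are preserved because they are linear and satisfied by both $x^\star$ and $\tilde{x}$; hence $x_\delta$ is feasible for \eqref{eq:barrierprob} and
\[
F^\star \le f(x_\delta) + c \sum_{i,j} B_i^j(x_{i,\delta}).
\]
By continuity of $f$, I pick $\delta$ with $f(x_\delta) \le f^\star + \epsilon/4$; with this $\delta$ fixed the quantity $\sum_{i,j} B_i^j(x_{i,\delta})$ is a finite constant and $c$ can then be chosen small enough that the second term is also at most $\epsilon/4$.

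For the lower bound, Lemma \ref{lemma:equaloptvalue}(iii) confines every $x_i^k$ to the compact feasible set of \eqref{eq:prob}. On this set each continuous $g_i^j$ attains its minimum, which is strictly negative because Assumption \ref{asm:prob}(b) provides a strictly feasible point. Thus $-g_i^j(x_i^k)\in (0, M_i^j]$ for constants $M_i^j > 0$ that depend neither on $k$ nor on $c$, and each of the two standard barriers is uniformly bounded below: $-\log(-g_i^j(x_i^k))\ge -\log M_i^j$ for the logarithmic choice and $-1/g_i^j(x_i^k) > 0$ for the inverse choice. Summing gives $\sum_{i,j} B_i^j(x_i^k) \ge -L$ for a single constant $L\ge 0$, so $c \sum_{i,j} B_i^j(x_i^k) \ge -cL \ge -\epsilon/2$ whenever $c \le \epsilon/(2L)$.

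Taking $\bar{c}$ to be the minimum of the two thresholds obtained above and adding the two bounds yields $F^\star - f^\star - c\sum_{i,j} B_i^j(x_i^k) \le \epsilon$, which is exactly \eqref{eq:gap}. The main subtlety is the uniform-in-$k$ lower bound on the barrier term: it only works because Lemma \ref{lemma:equaloptvalue}(iii) keeps every iterate inside a $c$-independent compact set, so a single constant $L$ suffices for all $k$ simultaneously rather than a $k$-dependent one that might degenerate as iterates approach the boundary.
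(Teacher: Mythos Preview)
Your proof is correct and follows essentially the same two-part strategy as the paper: bound $F^\star-f^\star$ above by exhibiting a near-optimal point that is strictly feasible for \eqref{eq:barrierprob}, and bound the barrier sum below uniformly in $k$ using compactness of the ($c$-independent) feasible set. The only noteworthy difference is that you construct the near-optimal strictly feasible point explicitly as a convex combination $x_\delta=(1-\delta)x^\star+\delta\tilde{x}$ with the Slater point, whereas the paper simply asserts the existence of a suitable compact set $\mathcal{X}_\epsilon\subset$ (feasible set of \eqref{eq:barrierprob}) containing an $\epsilon/2$-optimal point; your version is a bit more concrete but the underlying idea is the same.
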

\begin{proof}
	See Appendix \ref{ssec:proofofthmasymp}.
\end{proof}
By Lemma \ref{lemma:optimality} and the first result in Theorem \ref{theo:optimality}, we obtain the second result in Theorem \ref{theo:optimality}.

%
%\begin{lemma}\label{lemma:equivalence}
%	Suppose Assumption \ref{asm:prob} holds. Then, the optimal set of problem \eqref{eq:probleminyz} is non-empty. In addition, if $\tilde{\mathbf{y}}$ is an optimal solution to problem \eqref{eq:probleminyz} and each $\tilde{x}_i$, $i\in\mathcal{V}$ is an optimal solution to \eqref{eq:subproblem} with $y_i=\tilde{y}_i$, then $\tilde{\mathbf{x}}$ is an optimal solution to problem \eqref{eq:barrierprob}.
%\end{lemma}
%\begin{proof}
%	See Appendix \ref{ssec:proofoflemmaequivalence}.
%\end{proof}
%

%Since $\sum_{i\in\mathcal{V}} f_i(x_i^k)$ is monotonically non-increasing, $x_i^k$, $i\in\mathcal{V}$ is feasible, and the optimal set of problem \eqref{eq:prob} is non-empty, there exists $\underline{f}>-\infty$ such that $\lim_{k\rightarrow \infty} \sum_{i\in\mathcal{V}} f_i(x_i^k)=\underline{f}$. Suppose the level set of problem \eqref{eq:prob} is bounded. Then, by Bolzano-Weierstrass theorem, there exists a convergent subsequence of $x_i^k$, $i\in\mathcal{V}$. 

\section{Numerical Experiment}\label{sec:numericalexample}

\begin{figure*}
    \centering
    \begin{subfigure}[b]{0.3\textwidth}
        \centering
        \includegraphics[width=5.4cm,height=3.9cm]{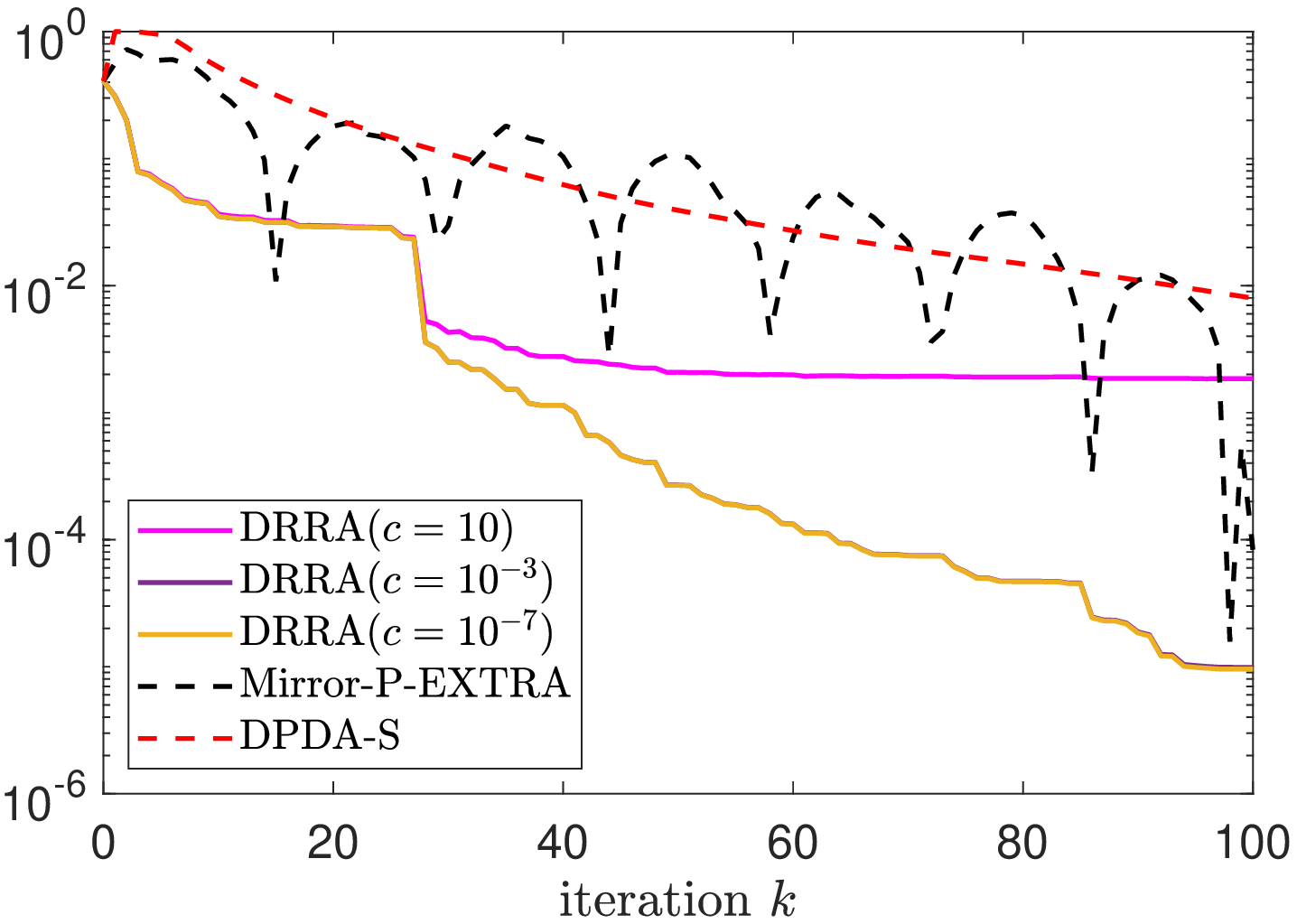}
        \caption{Objective error (Experiment I)}
        \label{fig:objvalue}
    \end{subfigure}
   \hfill
    \begin{subfigure}[b]{0.3\textwidth}
        \centering
        \includegraphics[width=5.4cm,height=3.85cm]{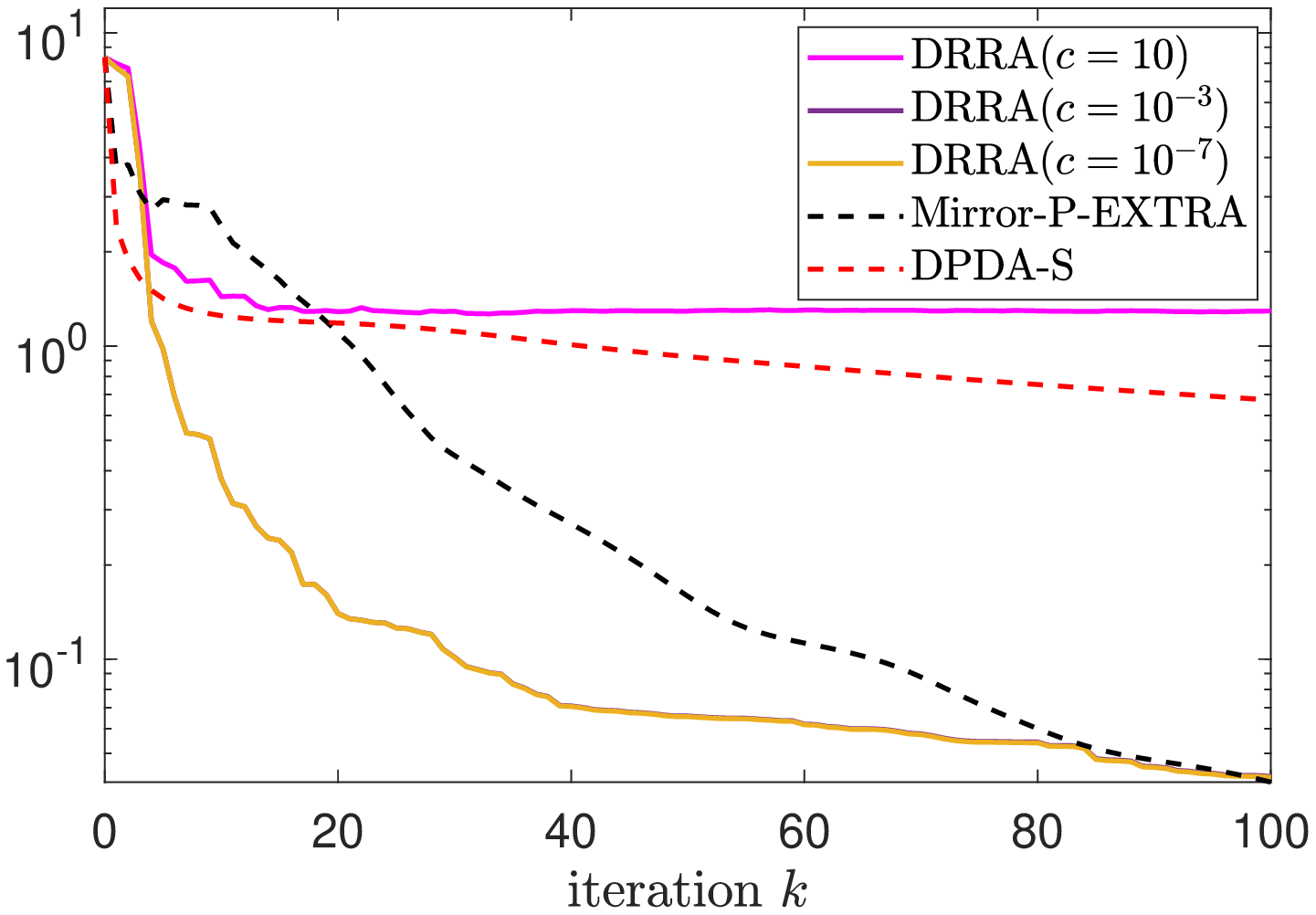}
        \caption{Objective error (Experiment II)}
        \label{fig:objvaluemultiple}
    \end{subfigure}
    \hfill
    \begin{subfigure}[b]{0.3\textwidth}
        \centering
        \includegraphics[width=5.4cm,height=3.9cm]{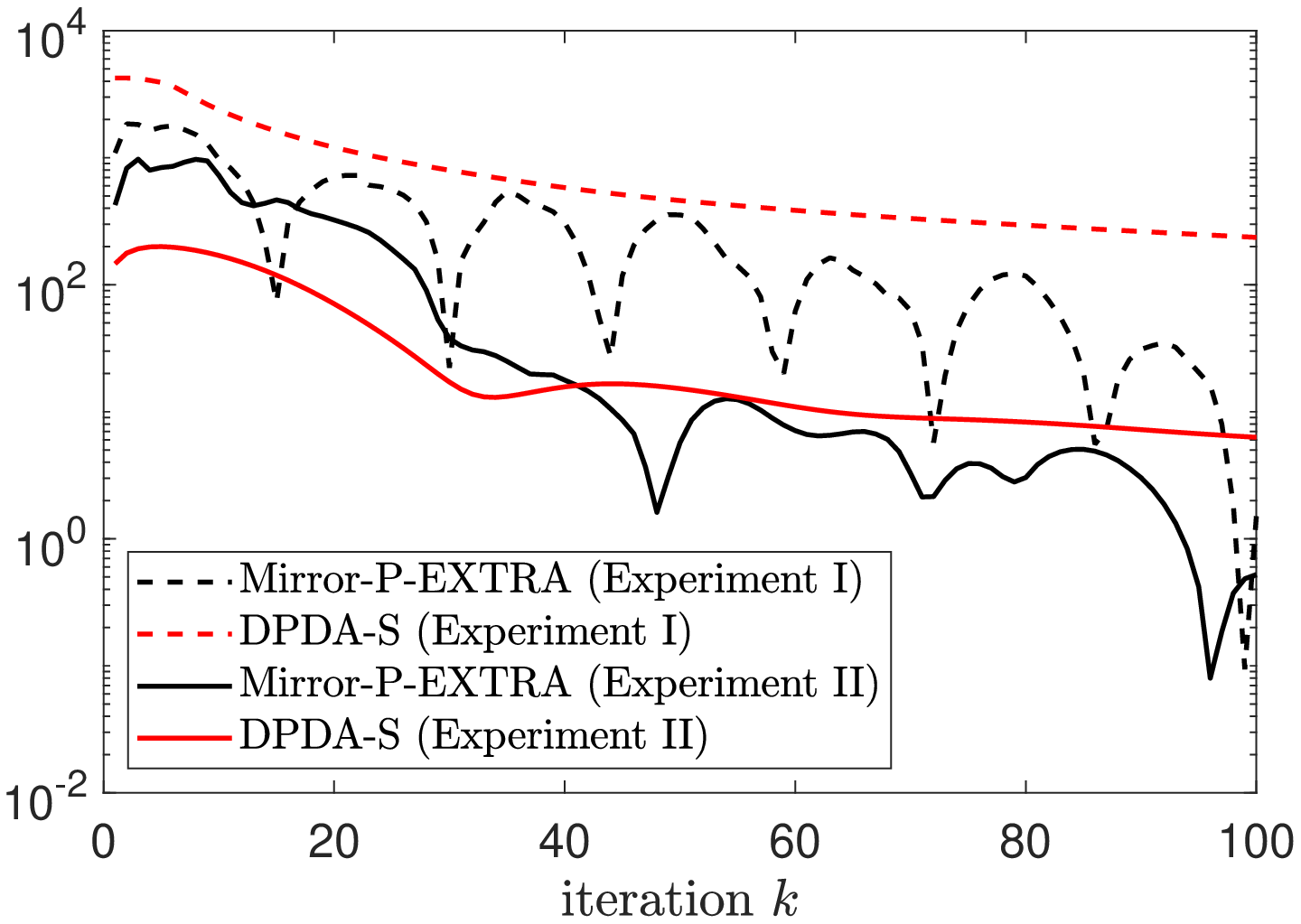}
        \caption{Feasibility error}
        \label{fig:feasi_error}
    \end{subfigure}
    \caption{
    Convergence of algorithms in Experiments I -- II}
    \label{fig:figure}
\end{figure*}

We evaluate the performance of DRRA through a numerical comparison with two alternative methods for solving the motivating examples in Section~\ref{sec:ME}. Problem data and the communication network topology are taken from the IEEE-118 bus system in Matpower \cite{Zimmerman11}. 
%The other experimental settings are as follows:
%\begin{itemize}
   % \item
   
   %{eq:problemmultiple} %{eq:ED}
   
\textbf{Experiment I (Economic Dispatch):} We consider problem~\eqref{eq:ED} where $f_i$ is a quadratic function,  $\mathcal{X}_i:=[\ell_i, u_i]\subset\mathbb{R}$ is a closed interval, and the number of power generators is $n=54$. In addition to power generators, the IEEE-118 bus system also includes several non-generators. The generators and non-generators form an undirected, connected communication graph $\mathcal{G}'$. To form the communication network ${\mathcal G}$ for the generators, we define two power generators $i,j$ to be neighbors in $\mathcal{G}$ if there exists a path in $\mathcal{G}'$ between $i,j$ which does not include any other generators.

\textbf{Experiment II (Multiple Resources):} %In this experiment we focus 
We consider problem~\eqref{eq:problemmultiple} where we focus on minimizing the disutilities of consumers in a smart grid. We let each power generator be a renewable or a coal power generator with equal probability. We treat both generators and non-generators as users and set $n=118$. The user disutility function is $f_i(x_i^{\text{renew}}, x_i^{\text{coal}})=\alpha_i(x_i^{\text{renew}}+x_i^{\text{coal}}-D_i)^2+\beta_i(x_i^{\text{coal}})^2$, where $x_i^{\text{renew}}$ and $x_i^{\text{coal}}$ are renewable and coal power consumption of node $i$, respectively, $D_i$ is the power demand of node $i$, $\alpha_i>0$ is a parameter indicating the discomfort of user $i$ for changing its demand, and $\beta_i>0$ is a parameter capturing the disapproval of user $i$ of using non-renewable energy. We set $\mathcal{X}_i=\{x_i=(x_i^{\text{renew}}, x_i^{\text{coal}})^T: x_i\ge\tilde{\ell}_i\}$, where $\tilde{\ell}_i=(-u_i,0)^T$ if $i$ is a renewable power generator and $\tilde{\ell}_i=(0,-u_i)^T$ if $i$ is a coal power generator, where $u_i$ is the same as in Experiment I, and $\tilde{\ell}_i=(0,0)^T$ if $i$ is a non-generator. The communication graph is the same $\mathcal{G}'$ as in Experiment I. Problems \eqref{eq:ED}-\eqref{eq:problemmultiple} defined by the two experiments satisfy Assumption \ref{asm:prob}.
%
%
%Next, we describe the network $\mathcal{G}$ used in the %simulations. 
%Except for 

To solve the two problems, we consider DRRA equipped with the logarithmic barrier function defined in Section~\ref{ssec:problembarriertran}, the Mirror-P-EXTRA algorithm \cite{Nedic17}, and the DPDA-S algorithm \cite{Aybat16}. The Mirror-P-EXTRA algorithm and the DPDA-S algorithm are able to produce iterates which converge asymptotically to the optimal solution of~\eqref{eq:ED} and \eqref{eq:problemmultiple}, but require every node to update at each iteration and cannot guarantee the feasibility of their iterates. We evaluate three different values for the barrier function parameter $c$ to study its effect on DRRA. The three aforementioned algorithms all involve subproblems at each iteration, which we solve using cvx~\cite{Grant14}. Moreover, for DRRA, we set $$y_i^0=\ell_i+\frac{b-\sum_{j=1}^n \ell_j}{n}\qquad \forall i\in\mathcal{V}$$ in Experiment I, and $$y_i^0=0.01(\tilde{\ell}_i-\frac{\sum_{j=1}^n \tilde{\ell}_j}{n})\qquad  \forall i\in\mathcal{V}$$ in Experiment II, which are feasible to the corresponding problem \eqref{eq:probleminyz}. For a fair comparison, we use the same initial value $x_i^0$, $i\in\mathcal{V}$ for the primal iterates of all the simulated algorithms. All the parameters in Mirror-P-EXTRA and DPDA-S are fine-tuned within their theoretically allowed ranges for the fastest convergence. We use the voting-based selection method detailed in Section \ref{ssec:distributedselection} for DRRA.

Figure \ref{fig:figure}(a)--(b) shows the convergence of the relative objective error $(\sum_{i\in\mathcal{V}} f_i(x_i^k)-f^\star)/f^\star$ in the two experiments, where $f^\star$ is the optimal value found off-line by solving the full problem using cvx. In addition, the curves for DRRA with $c=10^{-3}$ and $c=10^{-7}$ coincide in Figure \ref{fig:figure}(a)--(b). At each iteration of Mirror-P-EXTRA and DPDA-S, the iterate $x_i^k$, $i\in\mathcal{V}$ satisfies the local constraints in \eqref{eq:ED} and \eqref{eq:problemmultiple}, but it does not necessarily satisfy the global equality constraint. Thus, we plot the feasibility errors $\|\sum_{i\in\mathcal{V}} x_i^k-b\|$ for Experiment I and $\|\sum_{i\in\mathcal{V}} x_i^k\|$ for Experiment II in Fig.~\ref{fig:figure}(c). 
%Of course, the DRRA iterates are always feasible. 

Figure \ref{fig:figure}(a)--(b) shows that DRRA with $c=10^{-3}$ and $c=10^{-7}$ (purple and yellow curves) converge faster than the other methods. %
%that in both experiments, DRRA with $c=10^{-3}$ and $c=10^{-7}$, purple and yellow solid curves, perform best. % within $100$ iterations among all the simulated methods in terms of objective error, 
%This demonstrates the competitiveness of DRRA. 
 %As Figure \ref{fig:figure}(b) indicates, the objective error of Mirror-P-EXTRA plotted in black dashed may converge better than the remaining methods after $100$ iterations in Experiment II.}
Moreover, Figure \ref{fig:figure}(c) indicates that both Mirror-P-EXTRA and DPDA-S require many iterations to reach a small feasibility error, while the iterates of DRRA are always feasible. Finally, we observe that both Figure 1(a)-(b) show that DRRA with smaller $c$ converge to a higher accuracy solution, which is consistent with our intuition because a smaller $c$ implies a smaller gap between the original problem \eqref{eq:prob} and the transformed problem \eqref{eq:barrierprob}.

%However, the convergence speed is not sensitive to $c$ because the curves correspond to $c=10$, $c=10^{-4}$, and $c=10^{-7}$ are relatively close to each other.

\section{Conclusion}\label{sec:conclusion}

We have developed a distributed resource reallocation algorithm (DRRA) for solving a class of optimal resource allocation problems. Unlike most existing distributed algorithms that only have asymptotic feasibility guarantees, every iterate of DRRA is feasible. Moreover, DRRA allows for local constraints, multiple global constraints, and multi-dimensional local decision variables, while the existing feasible methods that can handle local constraints only address problems with one global constraint and one-dimensional local variables. We also demonstrated the practical performance of DRRA via numerical experiments. 

\appendix

\subsection{Proof of Lemma~\ref{lemma:welldefinedalg}}\label{ssec:proofoflemmawelldefined}

\subsubsection{Preliminary lemmas}
We begin by introducing three lemmas which will be used in many places of the proof.
\begin{lemma}\label{lemma:solutionexistence}
	Suppose $X,Y\subseteq\mathbb{R}^d$ are convex, $f:X\rightarrow\mathbb{R}$ is convex and continuous, and $X,Y$ satisfy the following:
	\begin{enumerate}[(i)]
		\item $X$ is open and when $x$ goes to the boundary of $X$ from its interior, $f(x)$ goes to $+\infty$.
		\item $Y$ is closed.
		\item $X\cap Y$ is non-empty and bounded.
	\end{enumerate}
	Then, the optimal set of the following problem is non-empty and compact:
	\begin{equation*}%\label{eq:problemlemma}
	\underset{x\in X\cap Y}{\operatorname{minimize}}~f(x).
	\end{equation*}
\end{lemma}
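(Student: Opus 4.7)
The plan is to combine the blow-up property of $f$ near $\partial X$ with Weierstrass's theorem on a suitable sublevel set. Since $X$ is open, $X\cap Y$ need not be closed, so I cannot apply Weierstrass directly; I will instead cut down to a compact sublevel set where the bad part of the boundary is excluded automatically by the hypothesis.

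Concretely, first I would pick any $x_0\in X\cap Y$ (which exists by (iii)), set $\alpha=f(x_0)$, and consider the set
\[
S=\{x\in X\cap Y : f(x)\le \alpha\}.
\]
This set contains $x_0$, so it is non-empty, and it is bounded because $S\subseteq X\cap Y$ and (iii) gives boundedness of $X\cap Y$. The key step is to show $S$ is closed in $\mathbb{R}^d$. Take any sequence $\{x_n\}\subseteq S$ converging to some $x^\star\in\mathbb{R}^d$. Since $Y$ is closed by (ii), $x^\star\in Y$. If $x^\star\notin X$, then because $X$ is open, $x^\star$ lies on the boundary of $X$ while $\{x_n\}\subseteq X$, so by (i) we must have $f(x_n)\to+\infty$, contradicting $f(x_n)\le\alpha$. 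Hence $x^\star\in X$, and continuity of $f$ yields $f(x^\star)\le\alpha$, so $x^\star\in S$.

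With $S$ compact and non-empty and $f$ continuous on $S$, Weierstrass's theorem ensures that $f$ attains a minimum on $S$, say $f^\star$. Every $x\in X\cap Y\setminus S$ satisfies $f(x)>\alpha\ge f^\star$, so $f^\star$ is in fact the infimum of $f$ over all of $X\cap Y$, and the optimal set of the original problem coincides with $\operatorname*{argmin}_{x\in S}f(x)$. This set is non-empty, and as the preimage of $\{f^\star\}$ under the continuous function $f\colon S\to\mathbb{R}$ it is closed in $S$, hence compact.

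The only real obstacle is ruling out limit points on $\partial X$ when verifying closedness of $S$, and this is exactly what the barrier-type assumption (i) is designed for; every other step is a standard compactness argument. Convexity of $f$, $X$, $Y$ is not used for existence but would only matter if one wanted uniqueness, which the lemma does not claim.
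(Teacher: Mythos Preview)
Your proposal is correct and follows essentially the same approach as the paper's own proof: pick a feasible point $x_0$, restrict to the sublevel set $S=\{x\in X\cap Y:f(x)\le f(x_0)\}$, argue that $S$ is non-empty, bounded, and closed (the last using the barrier condition (i) to exclude limits on $\partial X$), and then apply Weierstrass. The only cosmetic difference is that the paper phrases the closedness of $X\cap\{f\le f(x_0)\}$ via closed-function arguments while you do it sequentially; your remark that convexity is not actually needed for the existence claim is also accurate.
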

\begin{proof}
We refer to~\cite{Wu21a} due to space limitations.
\begin{comment}
	Suppose $\tilde{x}\in X\cap Y$. Then, the optimal value of problem \eqref{eq:problemlemma} is smaller than or equal to $f(\tilde{x})$ and therefore, the optimal set of problem \eqref{eq:problemlemma} is identical to that of the following problem:
	\begin{equation}\label{eq:problemlemmaproof}
	\begin{split}
	\underset{x\in X\cap Y}{\operatorname{minimize}}~&~f(x)\\
	\operatorname{subject~to} &~ f(x)\le f(\tilde{x}).
	\end{split}
	\end{equation}
	
	To prove the optimal set of problem \eqref{eq:problemlemmaproof} is non-empty and compact, we first show its feasible set $X\cap Y\cap\{x\in\mathbb{R}^d: f(x)\le f(\tilde{x})\}$ is non-empty and compact. Since $f$ is convex and continuous, it is closed. As a result, $\{x\in\mathbb{R}^d: f(x)\le f(\tilde{x})\}$ is closed, which, together with condition (i), implies the closeness of $X\cap\{x\in\mathbb{R}^d: f(x)\le f(\tilde{x})\}$. In addition, $X\cap Y$ is bounded and $Y$ is closed. Therefore, $X\cap Y\cap\{x\in\mathbb{R}^d: f(x)\le f(\tilde{x})\}$ is compact. The set $X\cap Y\cap\{x: f(x)\le f(\tilde{x})\}$ is non-empty because $\tilde{x}\in X\cap Y\cap\{x\in\mathbb{R}^d: f(x)\le f(\tilde{x})\}$.
	
	Since the feasible set of \eqref{eq:problemlemmaproof} is compact and non-empty, its optimal set is compact and non-empty \cite[Lemma A.8]{Bertsekas99}, so is the optimal set of \eqref{eq:problemlemma}.
\end{comment}
\end{proof}

%Based on Lemma \ref{lemma:solutionexistence}, we show that the feasible set and the optimal set of problem \eqref{eq:barrierprob} are non-empty and bounded.
\begin{lemma}\label{lemma:lemmaertyofbarrierobj}
	Suppose Assumption \ref{asm:prob} holds. Then, both the feasible set and the optimal set of problem \eqref{eq:barrierprob} are non-empty and bounded.
\end{lemma}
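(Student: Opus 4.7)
The plan is to handle the feasible set first using Assumptions~\ref{asm:prob}(b) and (c), and then to invoke Lemma~\ref{lemma:solutionexistence} to obtain non-emptiness and boundedness of the optimal set.

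For the feasible set of \eqref{eq:barrierprob}, non-emptiness is immediate from Assumption~\ref{asm:prob}(b): the points $\{\tilde{x}_i\}_{i\in\mathcal{V}}$ satisfy $g_i^j(\tilde{x}_i)<0$ for every $i,j$, placing each $\tilde{x}_i$ in $\tilde{\mathcal{X}}_i$, and they already satisfy both coupling conditions. Boundedness is equally quick: since $\tilde{\mathcal{X}}_i\subseteq\mathcal{X}_i$, the feasible set of \eqref{eq:barrierprob} is a subset of the feasible set of \eqref{eq:prob}, which is compact by Assumption~\ref{asm:prob}(c).

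For the optimal set, I would apply Lemma~\ref{lemma:solutionexistence} with $X := \prod_{i\in\mathcal{V}}\tilde{\mathcal{X}}_i$, with $Y$ the closed convex polyhedron defined by the two coupling constraints $\sum_i A_i^{\text{in}} x_i\le b^{\text{in}}$ and $\sum_i A_i^{\text{eq}} x_i = b^{\text{eq}}$, and with $f(\mathbf{x}) := \sum_{i\in\mathcal{V}} F_i(x_i)$. The easy checks: $X$ is open and convex because each $\tilde{\mathcal{X}}_i$ is an intersection of strict sublevel sets of the continuous convex functions $g_i^j$; $Y$ is closed and convex; $f$ is convex (sum of convex $F_i$) and continuous on $X$ (each $F_i$ is twice continuously differentiable on $\tilde{\mathcal{X}}_i$); and $X\cap Y$ coincides with the feasible set of \eqref{eq:barrierprob}, which the previous paragraph shows to be non-empty and bounded.

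The main obstacle is verifying condition~(i) of Lemma~\ref{lemma:solutionexistence}, namely that $f(\mathbf{x}^k)\to+\infty$ whenever $\mathbf{x}^k\in X$ converges to a point in $\partial X$. Such a convergent sequence is bounded, and at least one component $x_{i_0}^k$ must tend to the boundary of $\tilde{\mathcal{X}}_{i_0}$, i.e., $g_{i_0}^{j_0}(x_{i_0}^k)\to 0^-$ for some $j_0$; the defining property of the barrier then gives $B_{i_0}^{j_0}(x_{i_0}^k)\to +\infty$. Along the same bounded sequence, each $f_\ell$ is continuous on $\mathcal{X}_\ell$ and hence bounded, and each remaining $B_\ell^{j'}$ is either continuous at its limit point (hence bounded) or also diverges to $+\infty$, so $\sum_{\ell\in\mathcal{V}} F_\ell(x_\ell^k)\to+\infty$ as required. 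Lemma~\ref{lemma:solutionexistence} then yields that the optimal set of \eqref{eq:barrierprob} is non-empty and compact, and in particular bounded, completing the proof.
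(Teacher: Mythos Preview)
Your proof is correct and follows the same route as the paper: you establish non-emptiness and boundedness of the feasible set directly from Assumption~\ref{asm:prob}(b)--(c), and then apply Lemma~\ref{lemma:solutionexistence} with exactly the same choices of $X$, $Y$, and $f$ that the paper uses. The paper's own proof is a terse two-line version that simply asserts the hypotheses of Lemma~\ref{lemma:solutionexistence} hold; your write-up is more explicit, in particular the verification of condition~(i), but the underlying argument is identical.
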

\begin{proof}
By Assumption \ref{asm:prob}, the feasible set of \eqref{eq:barrierprob} is non-empty and bounded. Let $x=(x_1^T,\ldots,x_n^T)^T$, $f(x)=\sum_{i\in\mathcal{V}} F_i(x_i)$, $X=\{x:~x_i\in\tilde{\mathcal{X}}_i, \forall i\in\mathcal{V}\}$, and $Y=\{x: \sum_{i\in\mathcal{V}} A_i^{\text{in}}x_i\le b^{\text{in}},~\sum_{i\in\mathcal{V}} A_i^{\text{eq}}x_i = b^{\text{eq}}\}$. Then, since all conditions in Lemma \ref{lemma:solutionexistence} hold, the result follows.
%optimal set of \eqref{eq:barrierprob} is non-empty and bounded.
\end{proof}

\begin{lemma}\label{lemma:feasibilityofy}
    Suppose that Assumption~\ref{asm:prob} holds. Also suppose $\{y_i\}_{i\in\mathcal{V}}$ satisfies $\sum_{i\in\mathcal{V}} y_i = b$ and the feasible set of problem \eqref{eq:subproblem} is non-empty for all $i\in\mathcal{V}$. Then, $\{\tilde{x}_i\}_{i\in\mathcal{V}}$ is feasible to problem \eqref{eq:barrierprob} if $\tilde{x}_i$ is feasible to \eqref{eq:subproblem} for all $i\in\mathcal{V}$. In addition, the optimal set of \eqref{eq:subproblem} is non-empty and bounded, and $\{y_i\}_{i\in\mathcal{V}}$ is feasible to \eqref{eq:probleminyz}.
\end{lemma}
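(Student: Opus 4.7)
The claim has three parts: (i) joint feasibility of $\{\tilde x_i\}_{i\in\mathcal V}$ to \eqref{eq:barrierprob}; (ii) non-emptiness and boundedness of the optimal set of \eqref{eq:subproblem} for each $i\in\mathcal V$; and (iii) feasibility of $\{y_i\}_{i\in\mathcal V}$ to \eqref{eq:probleminyz}. I will dispatch (i) and (iii) essentially for free, and concentrate the work on (ii).

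For (i), assume each $\tilde x_i$ is feasible to \eqref{eq:subproblem}. Summing $A_i^{\text{in}}\tilde x_i\le y_i^{\text{in}}$ and $A_i^{\text{eq}}\tilde x_i = y_i^{\text{eq}}$ over $i\in\mathcal V$ and using $\sum_{i\in\mathcal V} y_i = b$ yields exactly the coupling constraints of \eqref{eq:barrierprob}; together with $\tilde x_i\in\tilde{\mathcal X}_i$ this is feasibility to \eqref{eq:barrierprob}.

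For (ii), I plan to invoke Lemma~\ref{lemma:solutionexistence} with $X=\tilde{\mathcal X}_i$, $Y=\{x\in\mathbb R^{d_i}:\; A_i^{\text{in}}x\le y_i^{\text{in}},\; A_i^{\text{eq}}x=y_i^{\text{eq}}\}$, and objective $F_i$. The convexity hypotheses, openness of $X$, closedness of $Y$, and the boundary blow-up of $F_i$ all follow from Assumption~\ref{asm:prob}(a) together with the barrier properties listed in Section~\ref{ssec:problembarriertran}; non-emptiness of $X\cap Y$ is the hypothesis of the lemma. The only condition that requires real work is boundedness of $X\cap Y$. The key observation, and the main obstacle, is a completion argument: given any $x_i\in X\cap Y$ and choosing $\tilde x_j$ feasible to subproblem $j$ for each $j\ne i$ (which exists by hypothesis), the vector $(x_i,\tilde x_{-i})$ satisfies the coupling constraints by the same summation trick used in (i) and therefore lies in the feasible set of \eqref{eq:barrierprob}. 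Hence $X\cap Y$ is contained in the projection of the feasible set of \eqref{eq:barrierprob} onto the $i$-th block of coordinates, and by Lemma~\ref{lemma:lemmaertyofbarrierobj} that feasible set is bounded, so its projection is bounded as well. Lemma~\ref{lemma:solutionexistence} then yields a non-empty compact optimal set for \eqref{eq:subproblem}.

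Finally, for (iii), the non-emptiness part of (ii) guarantees that the minimum defining $\phi_i(y_i)$ is attained, so $\phi_i(y_i)<+\infty$ for every $i\in\mathcal V$. Combined with the given $\sum_{i\in\mathcal V} y_i = b$, this is precisely the definition of feasibility of $\{y_i\}_{i\in\mathcal V}$ to \eqref{eq:probleminyz}. Every step beyond the completion/projection argument for boundedness is a direct rewrite of constraints or an invocation of a previously established lemma.
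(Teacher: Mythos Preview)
Your proposal is correct and follows essentially the same route as the paper: both sum the local constraints to get (i), invoke Lemma~\ref{lemma:solutionexistence} with $X=\tilde{\mathcal X}_i$ and $Y$ the polyhedral constraint set to get (ii), and deduce (iii) from (ii). The only difference is that you spell out the completion/projection argument for boundedness of the feasible set of \eqref{eq:subproblem}, which the paper compresses into a single ``Therefore'' after citing Lemma~\ref{lemma:lemmaertyofbarrierobj}.
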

\begin{proof}
    Since $\sum_{i\in\mathcal{V}} y_i=b$, if $\tilde{x}_i$ is feasible to problem \eqref{eq:subproblem} for all $i\in\mathcal{V}$, then $\{\tilde{x}_i\}_{i\in\mathcal{V}}$ is feasible to problem \eqref{eq:barrierprob}. In addition, by Lemma \ref{lemma:lemmaertyofbarrierobj}, the feasible set of \eqref{eq:barrierprob} is bounded. Therefore, the feasible set of \eqref{eq:subproblem} is bounded for all $i\in\mathcal{V}$. Letting $x=x_i$, $f(x)=f_i(x_i)$, $X=\tilde{\mathcal{X}}_i$, and $Y=\{x_i\in\mathbb{R}^{d_i}: A_i^{\text{in}}x_i\le 
   y_i^{\text{in}},~A_i^{\text{eq}}x_i=y_i^{\text{eq}}\}$ in Lemma \ref{lemma:solutionexistence}, we find that the optimal set of problem \eqref{eq:subproblem} is non-empty and bounded for all $i\in\mathcal{V}$. This, together with $\sum_{i\in\mathcal{V}} y_i=b$, implies the feasibility of $\{y_i\}_{i\in\mathcal{V}}$ to problem \eqref{eq:probleminyz}.
\end{proof}

\subsubsection{Main part of the proof}\label{ssec:mainproofoflemmawelldefined}

Since $\{y_i^0\}_{i\in\mathcal{V}}$ is feasible to problem \eqref{eq:probleminyz}, it follows from Lemma \ref{lemma:feasibilityofy} that the optimal set of problem \eqref{eq:subproblem} with $y_i=y_i^0$ is non-empty.

Below, we first prove that the optimal set of \eqref{eq:xupdateprob} is non-empty for all $i\in U_k$ if $\{y_j^k\}_{j\in\mathcal{V}}$ is feasible to \eqref{eq:probleminyz}. Suppose $i\in U_k$ and $\{y_j^k\}_{j\in\mathcal{V}}$ is feasible to \eqref{eq:probleminyz}. We let $x$ be the vector formed by stacking the local decision vectors $x_j$, $j\in\mathcal{N}_i\cup\{i\}$, define $f(x)=\sum_{j\in\mathcal{N}_i\cup\{i\}} F_j(x_j)$, $X=\{x: x_j\in\tilde{\mathcal{X}}_j,\forall j\in\mathcal{N}_i\cup\{i\}\}$, and let $Y$ be the constraint set formed by the linear inequality and equality constraints of \eqref{eq:xupdateprob} in Lemma \ref{lemma:solutionexistence}. Clearly, the conditions (i)--(ii) in Lemma \ref{lemma:solutionexistence} hold. If $X\cap Y$ is non-empty and bounded, then all the conditions in Lemma \ref{lemma:solutionexistence} hold and thus, the optimal set of \eqref{eq:xupdateprob} is non-empty.

The set $X\cap Y$ is non-empty because it contains $\{x_j^k\}_{j\in\mathcal{N}_i\cup\{i\}}$. Suppose $\{x_j'\}_{j\in\mathcal{N}_i\cup\{i\}}$ is an arbitrary element in $X\cap Y$ and let $x_j'=x_j^k$ for all $j\in\mathcal{V}-(\mathcal{N}_i\cup\{i\})$. Since $\sum_{j\in\mathcal{V}} y_j^k=b$, $\{x_j'\}_{j\in\mathcal{V}}$ is feasible to problem \eqref{eq:barrierprob} whose feasible set is bounded by Lemma \ref{lemma:lemmaertyofbarrierobj}. Then, all the $x_j'$'s are in a bounded set and therefore, $X\cap Y$ is bounded.

Concluding the two paragraphs above, the optimal set of \eqref{eq:xupdateprob} is non-empty when $\{y_i^k\}_{i\in\mathcal{V}}$ is feasible to \eqref{eq:probleminyz}.

Next, we prove the feasibility of $\{y_i^k\}_{i\in\mathcal{V}}$ $\forall k\ge 0$ to problem \eqref{eq:probleminyz} by induction. Suppose $\{y_i^k\}_{i\in\mathcal{V}}$ is feasible to \eqref{eq:probleminyz}, which holds at $k=0$ and implies the existence of $\{x_i^{k+1}\}_{i\in\mathcal{V}}$ and $\{y_i^{k+1}\}_{i\in\mathcal{V}}$. Because $\sum_{i\in\mathcal{N}_j\cup\{j\}} y_i^{k+1}=\sum_{i\in\mathcal{N}_j\cup\{j\}}y_i^k$ $\forall j\in U_k$, $y_i^{k+1}=y_i^k$ $\forall i\in\mathcal{V}-\cup_{j\in U_k}(\mathcal{N}_j\cup\{j\})$, and $\sum_{i\in\mathcal{V}} y_i^k=b$, we have $\sum_{i\in\mathcal{V}} y_i^{k+1}=b$. In addition, $x_i^{k+1}$ is feasible to problem \eqref{eq:subproblem} with $y_i=y_i^{k+1}$ for all $i\in\mathcal{V}$. By Lemma \ref{lemma:feasibilityofy}, $\{y_i^{k+1}\}_{i\in\mathcal{V}}$ is feasible to problem \eqref{eq:probleminyz}. Accordingly, $\{y_j^k\}_{j\in\mathcal{V}}$ is feasible to \eqref{eq:probleminyz} for all $k\ge 0$.

Concluding all the above, the optimal set of \eqref{eq:xupdateprob} is non-empty for all $k\ge 0$ and $i\in U_k$.

\subsection{Proof of Lemma \ref{lemma:nonemptysolutionset}}\label{ssec:proofoflemmanonempy}

\subsubsection{Proof of (i)}
This result has been derived in the proof of Lemma \ref{lemma:welldefinedalg} in Appendix \ref{ssec:mainproofoflemmawelldefined}).
\subsubsection{Proof of (ii)}\label{sssec:part3proofoflemmanonempty}
To show that the optimal set of \eqref{eq:probleminyz} is non-empty, we simply construct an optimal solution. To this end, we suppose that $\{x_i^\star\}_{i\in\mathcal{V}}$ is an optimal solution to problem \eqref{eq:barrierprob}, which exists due to Lemma \ref{lemma:lemmaertyofbarrierobj}, and define $y_i^\star=[(y_i^{\text{in},\star})^T, (y_i^{\text{eq},\star})^T]$ $\forall i\in\mathcal{V}$ with
\begin{equation}\label{eq:deftildey}
y_i^{\text{in},\star}=A_i^{\text{in}}x_i^\star+\frac{b^{\text{in}}-\sum_{\ell\in\mathcal{V}} A_\ell^{\text{in}}x_\ell^\star}{n},~y_i^{\text{eq},\star}=A_i^{\text{eq}}x_i^\star.
\end{equation}
We prove that $\{y_i^\star\}_{i\in\mathcal{V}}$ is optimal to \eqref{eq:probleminyz} by showing that:
\begin{enumerate}[(a)]
	\item The variable $\{y_i^\star\}_{i\in\mathcal{V}}$ is feasible to problem \eqref{eq:probleminyz} and satisfies $\phi_i(y_i^\star)\le F_i(x_i^\star)$ $\forall i\in\mathcal{V}$.
	\item For any feasible solution $\{y_i\}_{i\in\mathcal{V}}$ of problem \eqref{eq:probleminyz}, $\sum_{i\in\mathcal{V}} \phi_i(y_i)\ge \sum_{i\in\mathcal{V}} F_i(x_i^\star)$.
\end{enumerate}
If (a) and (b) hold, then $\{y_i^\star\}_{i\in\mathcal{V}}$ is feasible to problem \eqref{eq:probleminyz} and $\sum_{i\in\mathcal{V}} \phi_i(y_i^\star)\le \sum_{i\in\mathcal{V}} \phi_i(y_i)$ for any feasible solution $\{y_i\}_{i\in\mathcal{V}}$ of \eqref{eq:probleminyz}, which implies that $\{y_i^\star\}_{i\in\mathcal{V}}$ is optimal to \eqref{eq:probleminyz}.

To prove (a), note that $x_i^\star$ is feasible to problem \eqref{eq:subproblem} with $y_i=y_i^\star$ for all $i\in\mathcal{V}$. Accordingly, $\phi_i(y_i^\star)\le F_i(x_i^\star)$. In addition, $\sum_{i\in\mathcal{V}}y_i^\star=b$ due to \eqref{eq:deftildey}. Then by Lemma \ref{lemma:feasibilityofy}, $\{y_i^\star\}_{i\in\mathcal{V}}$ is feasible to problem \eqref{eq:probleminyz}.

Below, we prove (b). Since $\{y_i\}_{i\in\mathcal{V}}$ is feasible to problem \eqref{eq:probleminyz}, by Lemma \ref{lemma:feasibilityofy}, there exists an optimal solution $x_i$ to problem \eqref{eq:subproblem} for all $i\in\mathcal{V}$ and $\{x_i\}_{i\in\mathcal{V}}$ is feasible to problem \eqref{eq:barrierprob}. In addition, $\{x_i^\star\}_{i\in\mathcal{V}}$ is an optimal solution to problem \eqref{eq:barrierprob}. Then, $\sum_{i\in\mathcal{V}}\phi_i(y_i)=\sum_{i\in\mathcal{V}}F_i(x_i)\ge\sum_{i\in\mathcal{V}}F_i(x_i^\star)$.

\subsubsection{Preliminary lemmas for the proof of (iii)}\label{sssec:preproofoflemmanonempty}
 Below, we derive some lemmas to facilitate the proof of Lemma \ref{lemma:nonemptysolutionset}(iii).
\begin{lemma}\label{lemma:dualstronglyconcave}
	Suppose that Assumption \ref{asm:prob} holds and $\{y_i\}_{i\in\mathcal{V}}$ is feasible to problem \eqref{eq:probleminyz}. Then, for any $i\in\mathcal{V}$, $\phi_i$ is differentiable at $y_i$ and $\nabla \phi_i(y_i) = -u_i^\star(y_i)$, where $u_i^\star(y_i)$ is the unique geometric multiplier of \eqref{eq:subproblem}.
\end{lemma}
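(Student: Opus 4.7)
The plan is to identify $-u_i^\star(y_i)$ as the unique subgradient of the convex function $\phi_i$ at $y_i$ and then invoke the standard convex-analysis fact that a convex function with a unique subgradient at a point is differentiable there. Three ingredients are needed: convexity of $\phi_i$; existence of a geometric multiplier $u_i^\star(y_i)$ whose negative is a subgradient of $\phi_i$ at $y_i$; and uniqueness of this multiplier.

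Convexity of $\phi_i$ is routine. For $y_i^{(1)},y_i^{(2)}$ feasible to \eqref{eq:probleminyz} with primal optima $x_i^{(1)},x_i^{(2)}$ of \eqref{eq:subproblem}, the point $x_i^{(\alpha)}=\alpha x_i^{(1)}+(1-\alpha)x_i^{(2)}$ lies in $\tilde{\mathcal{X}}_i$ and is feasible at $y_i^{(\alpha)}=\alpha y_i^{(1)}+(1-\alpha)y_i^{(2)}$ by linearity of the perturbed constraints and convexity of $\tilde{\mathcal{X}}_i$, and convexity of $F_i$ on $\tilde{\mathcal{X}}_i$ yields $\phi_i(y_i^{(\alpha)})\le \alpha\phi_i(y_i^{(1)})+(1-\alpha)\phi_i(y_i^{(2)})$. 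For the existence of a geometric multiplier, since the explicit constraints of \eqref{eq:subproblem} are affine, $\mathrm{dom}(F_i)=\tilde{\mathcal{X}}_i$ is open (so coincides with its relative interior), and by Lemma \ref{lemma:feasibilityofy} the feasible set of \eqref{eq:subproblem} is non-empty, the relative-interior version of Slater's condition for polyhedral constraints applies; strong duality holds and the dual optimal set, which coincides with the set of geometric multipliers, is non-empty. The subgradient inequality $\phi_i(y_i')\ge\phi_i(y_i)-\langle u_i^\star(y_i),\,y_i'-y_i\rangle$ is then obtained by a familiar calculation: for any $x'$ feasible for \eqref{eq:subproblem} at a perturbed right-hand side $y_i'$, the Lagrangian $L_{y_i'}(x',u_i^\star(y_i))$ lower-bounds $F_i(x')$ (its inequality terms are non-positive and its equality terms vanish); rewriting it as $L_{y_i}(x',u_i^\star(y_i))-\langle u_i^\star(y_i),y_i'-y_i\rangle$, minimizing over $x'$, and invoking strong duality at $y_i$ gives the desired bound, hence $-u_i^\star(y_i)\in\partial\phi_i(y_i)$.

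Uniqueness of $u_i^\star(y_i)$ is where Assumption \ref{asm:prob}(d) enters. Because $\tilde{\mathcal{X}}_i$ is open and $F_i$ is differentiable on it, every primal--dual optimal pair $(x_i^*,u^*)$ of \eqref{eq:subproblem} satisfies the stationarity identity $\nabla F_i(x_i^*)+A_i^T u^*=0$. Given any two geometric multipliers $u_1^*,u_2^*$ and any primal optimum $x_i^*$, subtraction gives $A_i^T(u_1^*-u_2^*)=0$; since $A_i$ has full row rank, $A_i^T$ has trivial kernel, forcing $u_1^*=u_2^*$.

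Putting the three ingredients together, $\partial\phi_i(y_i)=\{-u_i^\star(y_i)\}$, and because $\phi_i$ is convex a unique subgradient forces differentiability with $\nabla\phi_i(y_i)=-u_i^\star(y_i)$. The subtlest point I anticipate is invoking strong duality correctly given that $F_i$ has the open domain $\tilde{\mathcal{X}}_i$ and blows up at its boundary through the barrier term; the saving feature is that all explicit constraints in \eqref{eq:subproblem} are affine, so the relative-interior condition guaranteed by Lemma \ref{lemma:feasibilityofy} is enough, and no strict feasibility of the inequality constraints has to be checked.
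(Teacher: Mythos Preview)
Your proof is correct and follows the same three-ingredient structure as the paper's (convexity of $\phi_i$, identification of subgradients with negatives of geometric multipliers, and uniqueness of the multiplier), so in that sense the approaches coincide. The one genuine difference is in the uniqueness step. The paper associates with two hypothetical multipliers $u',u''$ possibly \emph{distinct} primal optima $x',x''$, then uses a monotonicity/co-coercivity argument (exploiting twice continuous differentiability of $F_i$ on a compact segment) to force $\nabla F_i(x')=\nabla F_i(x'')$ before invoking full row rank of $A_i$. Your route is shorter: you fix a single primal optimum $x_i^*$, observe that the saddle-point property makes $x_i^*$ a minimizer of $L(\cdot,u^*)$ over the open set $\tilde{\mathcal{X}}_i$ for \emph{every} geometric multiplier $u^*$, and subtract the resulting stationarity equations directly. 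This is more elementary and does not need the local smoothness bound the paper extracts from $\nabla^2 F_i$; what it buys the paper is nothing beyond what you already have, so your simplification is a genuine one. One small point worth making explicit in your write-up is that $y_i$ lies in the interior of $\operatorname{dom}\phi_i$ (which follows from the full row rank of $A_i$ and openness of $\tilde{\mathcal{X}}_i$), since the ``unique subgradient $\Rightarrow$ differentiable'' implication for convex functions requires this; the paper is equally silent on it.
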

\begin{proof}
We refer to \cite{Wu21a} due to space limitations.
\end{proof}

\begin{lemma}\label{lemma:optimalsolutiony}
    Suppose that all the conditions in Lemma \ref{lemma:welldefinedalg} hold. Then, $\{y_j^{k+1}\}_{j\in\mathcal{N}_i\cup\{i\}}$ is an optimal solution to the following problem for all $k\ge 0$ and $i\in U_k$:
    \begin{equation}\label{eq:ykneighbouringupdate}
        \begin{split}
        \underset{y_j\in\mathbb{R}^m, j\in\mathcal{N}_{i}\cup\{i\}}{\operatorname{minimize}} &~ \sum_{j\in\mathcal{N}_i\cup\{i\}} \phi_j(y_j)\\
        \operatorname{subject~to}\quad &~ \sum_{j\in\mathcal{N}_i\cup\{i\}} (y_j-y_j^k) = \mathbf{0}.
        \end{split}
    \end{equation}
\end{lemma}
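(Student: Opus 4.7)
The plan is to verify the (sufficient) KKT conditions for problem~\eqref{eq:ykneighbouringupdate} at $\{y_j^{k+1}\}_{j\in\mathcal{N}_i\cup\{i\}}$. Problem~\eqref{eq:ykneighbouringupdate} is convex, since each $\phi_j$ is the optimal value of a convex parametric program (cf.\ \cite[Section 5.6.1]{Boyd04}) and hence convex in its argument. The KKT conditions reduce to (i) feasibility and (ii) the existence of a common multiplier $\mu\in\mathbb{R}^m$ with $\nabla\phi_j(y_j^{k+1})=-\mu$ for every $j\in\mathcal{N}_i\cup\{i\}$.

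Feasibility is direct. Summing \eqref{eq:yupdatein} over $j\in\mathcal{N}_i\cup\{i\}$ makes the averaged correction telescope, yielding $\sum_j y_j^{\text{in},k+1}=\sum_j y_j^{\text{in},k}$; summing \eqref{eq:yupdateeq} together with the equality constraint of \eqref{eq:xupdateprob} yields $\sum_j y_j^{\text{eq},k+1}=\sum_j y_j^{\text{eq},k}$. Moreover, $x_j^{k+1}$ is feasible to \eqref{eq:subproblem} with $y_i=y_j^{k+1}$: the equality holds by construction via \eqref{eq:yupdateeq}, and $y_j^{\text{in},k+1}-A_j^{\text{in}}x_j^{k+1}=\frac{1}{|\mathcal{N}_i|+1}\sum_\ell(y_\ell^{\text{in},k}-A_\ell^{\text{in}}x_\ell^{k+1})\ge\mathbf{0}$ by the inequality constraint of \eqref{eq:xupdateprob}. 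Hence $y_j^{k+1}$ is feasible to \eqref{eq:probleminyz}, and Lemma~\ref{lemma:dualstronglyconcave} applies: each $\phi_j$ is differentiable at $y_j^{k+1}$ with $\nabla\phi_j(y_j^{k+1})=-u_j^\star(y_j^{k+1})$.

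The crux is to show that $u_j^\star(y_j^{k+1})$ is the \emph{same} vector for every $j\in\mathcal{N}_i\cup\{i\}$. Let $\lambda^{k+1}=((\lambda^{\text{in},k+1})^T,(\lambda^{\text{eq},k+1})^T)^T$ be a Lagrange multiplier of \eqref{eq:xupdateprob}; its KKT conditions supply the common stationarity $\nabla F_j(x_j^{k+1})+A_j^T\lambda^{k+1}=\mathbf{0}$ for every $j$, dual feasibility $\lambda^{\text{in},k+1}\ge\mathbf{0}$, and the row-by-row aggregate complementarity $\lambda_r^{\text{in},k+1}\cdot\sum_\ell(y_{\ell,r}^{\text{in},k}-A_{\ell,r}^{\text{in}}x_\ell^{k+1})=0$. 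I would claim that $\lambda^{k+1}$ is itself a geometric multiplier of \eqref{eq:subproblem} with $y_i=y_j^{k+1}$ at $x_j^{k+1}$: stationarity and dual feasibility are immediate, while componentwise complementarity is handled by a case split on each row $r$. Either $\sum_\ell(y_{\ell,r}^{\text{in},k}-A_{\ell,r}^{\text{in}}x_\ell^{k+1})=0$, in which case \eqref{eq:yupdatein} forces $y_{j,r}^{\text{in},k+1}=A_{j,r}^{\text{in}}x_j^{k+1}$; or else $\lambda_r^{\text{in},k+1}=0$. In both cases $\lambda_r^{\text{in},k+1}(y_{j,r}^{\text{in},k+1}-A_{j,r}^{\text{in}}x_j^{k+1})=0$. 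Convex KKT sufficiency then makes $x_j^{k+1}$ optimal for \eqref{eq:subproblem} with $y_i=y_j^{k+1}$, and the uniqueness of the geometric multiplier guaranteed by Lemma~\ref{lemma:dualstronglyconcave} yields $u_j^\star(y_j^{k+1})=\lambda^{k+1}$ for every $j$. Thus $\mu=\lambda^{k+1}$ works, and the KKT conditions for \eqref{eq:ykneighbouringupdate} are satisfied. The main obstacle I expect is precisely this last translation step: converting the \emph{aggregate} complementarity of \eqref{eq:xupdateprob} into the \emph{per-node componentwise} complementarity needed at each \eqref{eq:subproblem}, where the specific averaging form of the update \eqref{eq:yupdatein} plays a decisive role.
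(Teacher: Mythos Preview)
Your proof is correct, but it follows a genuinely different route from the paper's. The paper argues purely at the primal level: given any feasible $\{y_j'\}_{j\in\mathcal{N}_i\cup\{i\}}$ for \eqref{eq:ykneighbouringupdate}, it extends to all of $\mathcal{V}$ by $y_j'=y_j^k$ elsewhere, invokes Lemma~\ref{lemma:feasibilityofy} to produce minimizers $x_j'$ of the subproblems, observes that $\{x_j'\}_{j\in\mathcal{N}_i\cup\{i\}}$ is feasible for \eqref{eq:xupdateprob}, and then chains the inequalities $\sum_j\phi_j(y_j')=\sum_j F_j(x_j')\ge \sum_j F_j(x_j^{k+1})\ge \sum_j\phi_j(y_j^{k+1})$. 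No differentiability of $\phi_j$, no multipliers, and no use of the full-row-rank condition in Assumption~\ref{asm:prob}(d) are needed.

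Your argument instead identifies the optimal dual variable: you pull the multiplier $\lambda^{k+1}$ of \eqref{eq:xupdateprob}, use the averaging structure of \eqref{eq:yupdatein} to convert aggregate complementarity into per-node complementarity, and then invoke the uniqueness in Lemma~\ref{lemma:dualstronglyconcave} (which in turn relies on Assumption~\ref{asm:prob}(d)) to pin down $\nabla\phi_j(y_j^{k+1})=-\lambda^{k+1}$ for every $j$. This is more machinery for the same conclusion, but it yields extra information: you get for free that the gradients $\nabla\phi_j(y_j^{k+1})$ coincide across $\mathcal{N}_i\cup\{i\}$, which is essentially the content the paper establishes separately (and by a very similar KKT comparison) in Lemma~\ref{lemma:equalgradient}. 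One minor wording point: when you write ``$y_j^{k+1}$ is feasible to \eqref{eq:probleminyz}'' you should mean the full collection $\{y_j^{k+1}\}_{j\in\mathcal{V}}$, which is exactly what Lemma~\ref{lemma:nonemptysolutionset}(i) (proved inside Lemma~\ref{lemma:welldefinedalg}) provides; that is the hypothesis Lemma~\ref{lemma:dualstronglyconcave} needs.
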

\begin{proof}
We prove the result by showing that for any $i\in U_k$, $\{y_j^{k+1}\}_{j\in \mathcal{N}_i\cup\{i\}}$ is feasible to problem \eqref{eq:ykneighbouringupdate} and for any feasible solution $\{y_j'\}_{j\in\mathcal{N}_i\cup\{i\}}$ of \eqref{eq:ykneighbouringupdate}, $\sum_{j\in\mathcal{N}_i\cup\{i\}}\phi_j(y_j')\ge \sum_{j\in\mathcal{N}_i\cup\{i\}}\phi_j(y_j^{k+1})$. This implies the optimality of $\{y_j^{k+1}\}_{j\in\mathcal{N}_i\cup\{i\}}$ to \eqref{eq:ykneighbouringupdate}.

Let $\mathcal{X}_i(y_i)=\{x_i\in\tilde{\mathcal{X}}_i: A_i^{\text{in}}x_i \le y_i^{\text{in}}, A_i^{\text{eq}}x_i = y_i^{\text{eq}}\}$ $\forall i\in\mathcal{V}$, which is the feasible set of problem \eqref{eq:subproblem}. Since $x_j^{k+1}\in\mathcal{X}_j(y_j^{k+1})$ $\forall j\in\mathcal{V}$ and $\sum_{j\in\mathcal{N}_i\cup\{i\}} (y_j^{k+1}-y_j^k)=\mathbf{0}$ $\forall i\in U_k$, $\{y_j^{k+1}\}_{j\in\mathcal{N}_i\cup\{i\}}$ is feasible to \eqref{eq:ykneighbouringupdate} for all $i\in U_k$.

Suppose $i\in U_k$ and $\{y_j'\}_{j\in\mathcal{N}_i\cup\{i\}}$ is a feasible solution of \eqref{eq:ykneighbouringupdate}. Let $y_j'=y_j^k$ $\forall j\in\mathcal{V}-(\mathcal{N}_i\cup\{i\})$. The variable $\{y_j'\}_{j\in\mathcal{V}}$ is feasible to problem \eqref{eq:probleminyz}. Then by Lemma \ref{lemma:feasibilityofy}, there exists $x_j'\in\mathcal{X}_j(y_j')$ such that $F_j(x_j') = \phi_j(y_j')$ for all $j\in\mathcal{V}$. Since $x_j'\in\mathcal{X}_j(y_j')$ $\forall j\in\mathcal{N}_i\cup\{i\}$ and $\sum_{j\in\mathcal{N}_i\cup\{i\}}y_j'=\sum_{j\in\mathcal{N}_i\cup\{i\}}y_j^k$, $\{x_j'\}_{j\in\mathcal{N}_i\cup\{i\}}$ is feasible to problem \eqref{eq:xupdateprob}. Moreover, $\{x_j^{k+1}\}_{j\in\mathcal{N}_i\cup\{i\}}$ is optimal to \eqref{eq:xupdateprob}. Accordingly,
\begin{equation}\label{eq:sumphiislarger}
    \sum_{j\in\mathcal{N}_i\cup\{i\}}\!\!\!\!\phi_j(y_j')\!=\!\!\!\sum_{j\in\mathcal{N}_i\cup\{i\}}\!\!\!\!F_j(x_j')\!\ge\!\!\! \sum_{j\in\mathcal{N}_i\cup\{i\}}\!\!\!\!F_j(x_j^{k+1}).
\end{equation}
In addition, $x_j^{k+1}\in\mathcal{X}_j(y_j^{k+1})$  $\forall j\in\mathcal{V}$ and therefore
\begin{equation}\label{eq:phismallerthansF}
    \phi_j(y_j^{k+1})\le F_j(x_j^{k+1}), \forall j\in\mathcal{V}.
\end{equation}
By \eqref{eq:sumphiislarger} and \eqref{eq:phismallerthansF}, $\sum_{j\in\mathcal{N}_i\cup\{i\}}\!\phi_j(y_j')\ge \sum_{j\in\mathcal{N}_i\cup\{i\}}\!\phi_j(y_j^{k+1})$.
\end{proof}

\begin{lemma}\label{lemma:boundedlevelsetproby}
	Suppose that Assumption \ref{asm:prob} holds. Then, the level sets of problem \eqref{eq:probleminyz} are bounded.
\end{lemma}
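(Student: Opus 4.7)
The plan is to show that feasibility alone to problem \eqref{eq:probleminyz} already forces $\{y_i\}_{i\in\mathcal{V}}$ into a bounded set, so that every level set (being a subset of the feasible set) is also bounded. Throughout, fix any level $\alpha\in\mathbb{R}$ and an arbitrary $\{y_i\}_{i\in\mathcal{V}}$ with $\sum_{i\in\mathcal{V}}y_i=b$ and $\phi_i(y_i)<+\infty$ for all $i\in\mathcal{V}$ (possibly satisfying $\sum_{i\in\mathcal{V}}\phi_i(y_i)\le\alpha$, but the objective bound will not actually be used).

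First, since each $\phi_i(y_i)<+\infty$, the feasible set of the subproblem \eqref{eq:subproblem} at $y_i$ is non-empty. Lemma \ref{lemma:feasibilityofy} then produces an optimal $x_i\in\tilde{\mathcal{X}}_i$ for \eqref{eq:subproblem} such that $A_i^{\text{in}}x_i\le y_i^{\text{in}}$ and $A_i^{\text{eq}}x_i=y_i^{\text{eq}}$, and further guarantees that the concatenation $\mathbf{x}=[x_1^T,\ldots,x_n^T]^T$ is feasible to problem \eqref{eq:barrierprob}.

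Next, I would invoke Lemma \ref{lemma:lemmaertyofbarrierobj}, which states that the feasible set of \eqref{eq:barrierprob} is bounded. Hence there is a constant $M$ (independent of the chosen $\{y_i\}_{i\in\mathcal{V}}$) such that $\|x_i\|\le M$ for every $i\in\mathcal{V}$. Consequently, $\|A_i^{\text{eq}}x_i\|$ and $\|A_i^{\text{in}}x_i\|$ are uniformly bounded, which already pins down the equality block: $y_i^{\text{eq}}=A_i^{\text{eq}}x_i$ is bounded. For the inequality block, the componentwise inequality $A_i^{\text{in}}x_i\le y_i^{\text{in}}$ provides a uniform componentwise lower bound on each $y_i^{\text{in}}$.

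The only remaining step, and the one requiring the slightest care, is to upgrade these componentwise lower bounds on $y_i^{\text{in}}$ to a two-sided bound. This is where the coupling equality $\sum_{i\in\mathcal{V}}y_i^{\text{in}}=b^{\text{in}}$ enters: for each $j\in\mathcal{V}$,
\begin{equation*}
y_j^{\text{in}}=b^{\text{in}}-\sum_{i\in\mathcal{V}\setminus\{j\}}y_i^{\text{in}},
\end{equation*}
and each summand on the right is bounded below by a quantity depending only on $M$ and $\{A_i^{\text{in}}\}$, which yields a uniform componentwise upper bound on $y_j^{\text{in}}$. Combining the lower bound from $A_i^{\text{in}}x_i\le y_i^{\text{in}}$ with this upper bound shows that $y_i^{\text{in}}$ lies in a bounded set as well. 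Therefore the entire stacked vector $\mathbf{y}$ is bounded, which completes the proof. I do not anticipate any real obstacle here; the argument is essentially bookkeeping once Lemma \ref{lemma:feasibilityofy} and Lemma \ref{lemma:lemmaertyofbarrierobj} are in hand, with the mildly non-obvious twist being the use of the coupling equality to convert one-sided bounds on $y_i^{\text{in}}$ into two-sided ones.
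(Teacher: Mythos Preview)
Your proposal is correct. The core sandwich argument---lower-bounding $y_i^{\text{in}}$ by $A_i^{\text{in}}x_i$ and upper-bounding it via $y_j^{\text{in}}=b^{\text{in}}-\sum_{i\ne j}y_i^{\text{in}}\le b^{\text{in}}-\sum_{i\ne j}A_i^{\text{in}}x_i$ with $\{x_i\}$ confined to the bounded feasible set of \eqref{eq:barrierprob}---is exactly what the paper uses (its equation \eqref{eq:ystarinterval}). The one genuine difference is scope: the paper applies this sandwich only to \emph{optimal} points $\{y_i^\star\}$, concludes that the optimal set is bounded, and then invokes \cite[Corollary 8.7.1]{Rockafellar70} (for a proper convex function, one bounded nonempty level set forces all level sets to be bounded). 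You instead apply the same sandwich to \emph{every} feasible $\{y_i\}$, showing directly that the whole feasible region of \eqref{eq:probleminyz} is bounded, so level sets are bounded as subsets. Your route is slightly more elementary (no convex-analysis citation needed) and yields the marginally stronger statement that the effective domain itself is bounded; the paper's route is a hair shorter once Rockafellar is available.
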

\begin{proof}
    Suppose $\{y_i^\star\}_{i\in\mathcal{V}}$ is an arbitrary optimal solution of problem \eqref{eq:probleminyz}. By Lemma \ref{lemma:feasibilityofy}, for each $i\in\mathcal{V}$, there exists an optimal solution $x_i^\star$ to problem \eqref{eq:subproblem} with $y_i=y_i^\star$, which, together with $\sum_{i\in\mathcal{V}} y_i^\star = b$, gives $y_i^{\text{eq},\star}=A_i^{\text{eq}}x_i^\star$ and
	\begin{equation}\label{eq:ystarinterval}
	A_i^{\text{in}}x_i^\star\!\le\! y_i^{\text{in},\star}\!=\!b^{\text{in}}-\!\!\sum_{\ell\in\mathcal{V}-\{i\}}\!\!\!y_{\ell}^{\text{in},\star}\le b^{\text{in}}\!-\!\!\sum_{\ell\in\mathcal{V}-\{i\}}\!\!\!A_\ell^{\text{in}}x_\ell^\star.
	\end{equation}
	Also by Lemma \ref{lemma:feasibilityofy}, $\{x_i^\star\}_{i\in\mathcal{V}}$ is feasible to problem \eqref{eq:barrierprob} whose feasible set is bounded by Lemma \ref{lemma:lemmaertyofbarrierobj}. This means that for any optimal solution $\{y_i^\star\}_{i\in\mathcal{V}}$ of \eqref{eq:probleminyz}, there exists $\{x_i^\star\}_{i\in\mathcal{V}}$ in the bounded feasible set of \eqref{eq:barrierprob} such that \eqref{eq:ystarinterval} holds. Therefore, the optimal set of \eqref{eq:probleminyz} is bounded. According to \cite[Corollary 8.7.1]{Rockafellar70}, all its level sets are bounded.
\end{proof}

For each $i\in\mathcal{V}$, let $y_{\mathcal{N}_i\cup\{i\}}$ be the vector formed by stacking $y_j$, $j\in\mathcal{N}_i\cup\{i\}$ and $\Psi_i(y_{\mathcal{N}_i\cup\{i\}})$ be the optimal value of the following problem:
\begin{equation*}
    \begin{split}
        \underset{z_j\in\mathbb{R}^m, j\in\mathcal{N}_{i}\cup\{i\}}{\operatorname{minimize}} &~ \sum_{j\in\mathcal{N}_i\cup\{i\}} \phi_j(z_j)\\
        \operatorname{subject~to}\quad &~ \sum_{j\in\mathcal{N}_i\cup\{i\}} (z_j-y_j) = \mathbf{0}.
    \end{split}
\end{equation*}

\begin{lemma}\label{lemma:equalgradient}
       Suppose that all the conditions in Lemma \ref{lemma:welldefinedalg} hold. For any $i\in\mathcal{V}$, if
        \begin{equation}\label{eq:identicalphi}
        \Psi_i(y_{\mathcal{N}_i\cup\{i\}}^k)=\sum_{j\in\mathcal{N}_i\cup\{i\}}\phi_j(y_j^k),
        \end{equation}
        then $\nabla \phi_j(y_j^k)=\nabla\phi_\ell(y_\ell^k)$ $\forall j,\ell\in\mathcal{N}_i\cup\{i\}$.
\end{lemma}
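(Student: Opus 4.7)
My plan is to recognize that the hypothesis \eqref{eq:identicalphi} asserts that $\{y_j^k\}_{j\in\mathcal{N}_i\cup\{i\}}$ is itself an optimal solution of the convex program whose optimal value defines $\Psi_i(y_{\mathcal{N}_i\cup\{i\}}^k)$. Indeed, the point $z_j=y_j^k$ trivially satisfies the equality constraint $\sum_{j\in\mathcal{N}_i\cup\{i\}}(z_j-y_j^k)=\mathbf{0}$, so it is feasible, and by the hypothesis it attains the optimal value. Hence the gradients at $\{y_j^k\}$ must satisfy the first-order optimality conditions of this equality-constrained convex program.

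Next I would check that those first-order conditions are available and meaningful here. By Lemma~\ref{lemma:nonemptysolutionset}(i) the iterate $\{y_j^k\}_{j\in\mathcal{V}}$ is feasible to problem \eqref{eq:probleminyz}, so in particular each $y_j^k$ lies in the effective domain of $\phi_j$. Lemma~\ref{lemma:dualstronglyconcave} then guarantees that each $\phi_j$ is differentiable at $y_j^k$ with $\nabla\phi_j(y_j^k)=-u_j^\star(y_j^k)$. Each $\phi_j$ is also convex as the primal function of a convex parametric program \eqref{eq:subproblem} (cf.\ \cite[Section 5.6.1]{Boyd04} and the reasoning already used for \eqref{eq:probleminyz}).

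The program defining $\Psi_i$ is therefore a convex minimization of a sum of differentiable convex functions subject to a single affine equality constraint. Since affine constraints automatically confer strong duality for convex problems, KKT conditions are both necessary and sufficient for optimality: there exists a multiplier $\lambda\in\mathbb{R}^m$ such that $\nabla\phi_j(y_j^k)+\lambda=\mathbf{0}$ for every $j\in\mathcal{N}_i\cup\{i\}$. Consequently $\nabla\phi_j(y_j^k)=-\lambda$ is the same vector for all such $j$, which is exactly the claimed equality of gradients.

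I do not foresee any serious obstacle; the argument is essentially bookkeeping that links the hypothesis to a KKT system. The only subtle point to make explicit is the justification that the $\phi_j$ are differentiable and convex at the points of interest, which follows by appealing to the previously proved Lemmas~\ref{lemma:nonemptysolutionset}(i) and~\ref{lemma:dualstronglyconcave}; once that is in hand, the common-multiplier conclusion is immediate from the KKT conditions for the single affine coupling constraint.
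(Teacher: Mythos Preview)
Your argument is correct but takes a genuinely different route from the paper. You apply the first-order optimality conditions directly to the $\phi$-level program that defines $\Psi_i$: since $\{y_j^k\}$ is feasible and attains the optimal value, and each $\phi_j$ is convex and differentiable at $y_j^k$ (Lemma~\ref{lemma:dualstronglyconcave}), the single affine coupling constraint forces all the gradients $\nabla\phi_j(y_j^k)$ to coincide with a common multiplier. The paper instead descends to the $x$-level: it first shows that $x_j^k$ is optimal for \eqref{eq:subproblem} with $y_j=y_j^k$, obtaining $\nabla F_j(x_j^k)=A_j^T\nabla\phi_j(y_j^k)$; it then argues from \eqref{eq:identicalphi} that $\{x_j^k\}_{j\in\mathcal{N}_i\cup\{i\}}$ is optimal for \eqref{eq:xupdateprob}, applies KKT there (where the open sets $\tilde{\mathcal{X}}_j$ make the stationarity condition immediate), and finally uses the full-row-rank assumption on $A_j$ to cancel $A_j^T$ and identify the $\nabla\phi_j(y_j^k)$.

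Your path is shorter; the paper's detour has two side benefits. First, it establishes the identity $\phi_i(y_i^k)=F_i(x_i^k)$ (equation~\eqref{eq:identicalFandphi}) along the way, which is reused verbatim in the proof of Lemma~\ref{lemma:equaloptvalue}(ii); with your proof that identity would have to be supplied separately. Second, by working in $x$-space the paper never needs to worry about the effective domain of the extended-real functions $\phi_j$. In your argument the KKT step is legitimate because differentiability of a convex $\phi_j$ at $y_j^k$ forces $y_j^k\in\operatorname{int}(\operatorname{dom}\phi_j)$, so the affine-constraint KKT conditions are indeed necessary; you use this implicitly and it is ultimately guaranteed by Lemma~\ref{lemma:dualstronglyconcave}, which is also where Assumption~\ref{asm:prob}(d) enters your proof. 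Making that implication explicit would tighten the one point you flagged as ``subtle.''
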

\begin{proof}
    We first show that for any $i\in\mathcal{V}$ and $k\ge 0$, $x_i^k$ is optimal to problem \eqref{eq:subproblem} with $y_i=y_i^k$. Combining \eqref{eq:sumphiislarger} with $y_j'=y_j^{k+1}$ $\forall j\in\mathcal{N}_i\cup\{i\}$ and \eqref{eq:phismallerthansF} gives
\begin{equation}\label{eq:sumyequaltosumF}
    \phi_j(y_j^{k+1})=F_j(x_j^{k+1}), \forall i\in U_k,\forall j\in\mathcal{N}_i\cup\{i\}.
\end{equation}
Because $x_i^{k+1}$ is feasible to problem \eqref{eq:subproblem} with $y_i=y_i^{k+1}$ for all $i\in\mathcal{V}$ and because of \eqref{eq:sumyequaltosumF}, $x_i^{k+1}$ is optimal to problem \eqref{eq:subproblem} with $y_i=y_i^{k+1}$ for all $i\in\cup_{j\in U_k} (\mathcal{N}_j\cup\{j\})$. In addition, $x_i^0$ is optimal to problem \eqref{eq:subproblem} with $y_i=y_i^0$ for all $i\in\mathcal{V}$ and $x_i^{k+1}=x_i^k$ and $y_i^{k+1}=y_i^k$ for all $i\in\mathcal{V}-\cup_{j\in U_k} (\mathcal{N}_j\cup\{j\})$. As a result, $x_i^k$ is optimal to problem \eqref{eq:subproblem} with $y_i=y_i^k$ for all $i\in\mathcal{V}$ and $k\ge 0$ and therefore
\begin{equation}\label{eq:identicalFandphi}
    \phi_i(y_i^k) = F_i(x_i^k), \forall i\in\mathcal{V}, \forall k\ge 0.
\end{equation}
    
Since each $\tilde{\mathcal{X}}_i$ is open, by the first-order optimality condition of \eqref{eq:subproblem} with $y_i=y_i^k$ we have $\nabla F_i(x_i^k)+A_i^Tu_i^\star(y_i^k) = \mathbf{0}, \forall i\in\mathcal{V}$, where $u_i^\star(y_i^k)$ is a geometric multiplier of \eqref{eq:subproblem} with $y_i=y_i^k$. Also, by Lemma \ref{lemma:nonemptysolutionset}(i) and Lemma \ref{lemma:dualstronglyconcave}, $\nabla \phi_i(y_i^k)=-u_i^\star(y_i^k)$. Therefore,
\begin{equation}\label{eq:graFanduik1}
    \nabla F_i(x_i^k)-A_i^T\nabla \phi_i(y_i^k) = \mathbf{0}, \forall i\in\mathcal{V}.
\end{equation}

Finally, based on \eqref{eq:graFanduik1} we prove that when \eqref{eq:identicalphi} holds, $\nabla \phi_j(y_j^k)=\nabla\phi_\ell(y_\ell^k)$ $\forall j,\ell\in\mathcal{N}_i\cup\{i\}$. For any feasible solution $\{\tilde{x}_j\}_{j\in\mathcal{N}_i\cup\{i\}}$ of problem \eqref{eq:xupdateprob}, we define $\tilde{y}_j$ $\forall j\in\mathcal{N}_i\cup\{i\}$ as $\tilde{y}_j^{\text{in}}=A_j^{\text{in}}\tilde{x}_j+\frac{1}{|\mathcal{N}_i|+1}\sum_{\ell\in\mathcal{N}_i\cup\{i\}}(y_\ell^{\text{in},k}-A_\ell^{\text{in}}\tilde{x}_\ell)$ and $\tilde{y}_j^{\text{eq}}=A_j^{\text{eq}}\tilde{x}_j$. Since $\tilde{x}_j\in\mathcal{X}_j(\tilde{y}_j)$, where $\mathcal{X}_j(\tilde{y}_j)$ is defined in the proof of Lemma \ref{lemma:optimalsolutiony},
\begin{equation}\label{eq:phiissmallerthanF}
    \phi_j(\tilde{y}_j)\le F_j(\tilde{x}_j), \forall j\in\mathcal{N}_i\cup\{i\}.
\end{equation}
In addition, $\{\tilde{y}_j\}_{j\in\mathcal{N}_i\cup\{i\}}$ is feasible to \eqref{eq:ykneighbouringupdate} and by \eqref{eq:identicalphi}, $\{y_j^k\}_{j\in\mathcal{N}_i\cup\{i\}}$ is optimal to \eqref{eq:ykneighbouringupdate}. These, together with \eqref{eq:identicalFandphi} and \eqref{eq:phiissmallerthanF}, yield $\sum_{j\in\mathcal{N}_i\cup\{i\}} F_j(x_j^k)=\sum_{j\in\mathcal{N}_i\cup\{i\}} \phi_j(y_j^k)\le \sum_{j\in\mathcal{N}_i\cup\{i\}}\phi_j(\tilde{y}_j)\le \sum_{j\in\mathcal{N}_i\cup\{i\}}F_j(\tilde{x}_j)$, which implies that $\{x_j^k\}_{j\in\mathcal{N}_i\cup\{i\}}$ is optimal to \eqref{eq:xupdateprob}. According to the first-order optimality condition of \eqref{eq:xupdateprob} and since $\tilde{\mathcal{X}}_j$ $\forall j\in\mathcal{N}_i\cup\{i\}$ are open, there exists $u'\in\mathbb{R}^m$ such that $\nabla F_j(x_j^k)+A_j^Tu' = \mathbf{0}~ \forall j\in\mathcal{N}_i\cup\{i\}$. This, together with \eqref{eq:graFanduik1} and the full row rank property of $A_j$, yields $\nabla \phi_j(y_j^k)=u'$ $\forall j\in\mathcal{N}_i\cup\{i\}$.
\end{proof}
\begin{lemma}\label{lemma:psicontinuity}
    Suppose that Assumption \ref{asm:prob} holds. Then, $\Psi_i(y_{\mathcal{N}_i\cup\{i\}})$ is continuous on its domain for all $i\in\mathcal{V}$.
\end{lemma}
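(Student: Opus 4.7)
The plan is to reformulate $\Psi_i$ as the value function of a convex parametric optimization problem with right-hand-side perturbations, and then invoke the classical result that a proper convex function is continuous on the interior of its effective domain. Since the effective domain will turn out to be open, this yields continuity on the entire domain.

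First, I would eliminate the auxiliary variables $z_j$ in the definition of $\Psi_i$. Unfolding each $\phi_j$ via problem \eqref{eq:subproblem} gives
\[
\Psi_i(y_{\mathcal{N}_i\cup\{i\}}) = \inf\Bigl\{\sum_{j\in\mathcal{N}_i\cup\{i\}}F_j(x_j) :\, x_j\in\tilde{\mathcal{X}}_j,\ A_j^{\text{in}}x_j\le z_j^{\text{in}},\ A_j^{\text{eq}}x_j=z_j^{\text{eq}},\ \sum_j z_j=\sum_j y_j\Bigr\}.
\]
For fixed $\{x_j\}$, a feasible $\{z_j\}$ exists if and only if $\sum_j A_j^{\text{in}} x_j\le \sum_j y_j^{\text{in}}$ and $\sum_j A_j^{\text{eq}} x_j = \sum_j y_j^{\text{eq}}$, and the $z_j$'s do not enter the objective. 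Therefore $\Psi_i(y_{\mathcal{N}_i\cup\{i\}})=\tilde{\Psi}_i\bigl(\sum_j y_j\bigr)$, where
\[
\tilde{\Psi}_i(s) := \inf_{x_j\in\tilde{\mathcal{X}}_j}\Bigl\{\sum_j F_j(x_j) :\, \sum_j A_j^{\text{in}} x_j\le s^{\text{in}},\ \sum_j A_j^{\text{eq}} x_j = s^{\text{eq}}\Bigr\}
\]
is a convex program in $\mathbb{R}^m$ with parameter $s$ appearing only on the right-hand side.

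Next, I would establish that $\tilde{\Psi}_i$ is convex with open effective domain. Convexity is the standard fact that the value function of a convex program with linear right-hand-side perturbations is convex: for $s^{(1)},s^{(2)}\in\text{dom}(\tilde{\Psi}_i)$ and near-optimal $\{x_j^{(1)}\},\{x_j^{(2)}\}$, the combination $\alpha x_j^{(1)}+(1-\alpha)x_j^{(2)}$ lies in the convex open set $\tilde{\mathcal{X}}_j$, is feasible at $\alpha s^{(1)}+(1-\alpha)s^{(2)}$, and convexity of $F_j$ controls the objective. For openness, I would note
\[
\text{dom}(\tilde{\Psi}_i) \;=\; \hat{A}\,\Bigl(\prod_{j\in\mathcal{N}_i\cup\{i\}}\tilde{\mathcal{X}}_j\Bigr) \;+\; \bigl(\mathbb{R}_{\ge 0}^{m^{\text{in}}}\times\{\mathbf{0}\}^{m^{\text{eq}}}\bigr),
\]
where $\hat{A}$ is the horizontal concatenation of the $A_j$'s over $j\in\mathcal{N}_i\cup\{i\}$. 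Since each $A_j$ has full row rank by Assumption~\ref{asm:prob}(d), so does $\hat{A}$; hence $\hat{A}$ is a surjective linear map from $\mathbb{R}^{\sum_j d_j}$ onto $\mathbb{R}^m$ and, by the open mapping theorem for finite-dimensional spaces, takes the open set $\prod_j\tilde{\mathcal{X}}_j$ to an open set in $\mathbb{R}^m$. Minkowski-summing an open set with an arbitrary set preserves openness (it is a union of translates), so $\text{dom}(\tilde{\Psi}_i)$ is open.

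Since $\tilde{\Psi}_i$ is a proper convex function whose effective domain coincides with its interior, the classical continuity result \cite[Theorem~10.1]{Rockafellar70} yields that $\tilde{\Psi}_i$ is continuous on $\text{dom}(\tilde{\Psi}_i)$. Because $\Psi_i=\tilde{\Psi}_i\circ L$ where $L(y_{\mathcal{N}_i\cup\{i\}})=\sum_j y_j$ is continuous (and linear), $\Psi_i$ is continuous on $\text{dom}(\Psi_i)=L^{-1}\bigl(\text{dom}(\tilde{\Psi}_i)\bigr)$. The main obstacle I anticipate is the rigorous verification of the openness of $\text{dom}(\tilde{\Psi}_i)$: it hinges on the open-mapping property of the concatenated matrix $\hat{A}$, which is exactly where Assumption~\ref{asm:prob}(d) becomes indispensable. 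A secondary technical point is checking that $\tilde{\Psi}_i$ is proper (i.e., not identically $+\infty$ and never $-\infty$), which follows from feasibility of \eqref{eq:barrierprob} (via Assumption~\ref{asm:prob}(b) and Lemma~\ref{lemma:lemmaertyofbarrierobj}) and the boundedness of the optimal set, so that the infimum is attained and finite.
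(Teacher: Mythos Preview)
The paper does not actually prove this lemma; it defers to the external reference~\cite{Wu21a} due to space limitations. Your approach---rewriting $\Psi_i$ as $\tilde\Psi_i\circ L$ with $\tilde\Psi_i$ the value function of a convex program with right-hand-side parameter, showing that $\tilde\Psi_i$ is a proper convex function with open effective domain, and invoking \cite[Theorem~10.1]{Rockafellar70}---is correct and is the natural route. The reduction $\Psi_i(y_{\mathcal{N}_i\cup\{i\}})=\tilde\Psi_i(\sum_j y_j)$, the Minkowski-sum description of the feasibility region, and the openness argument via the full row rank of $\hat A$ are all sound.

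The one place that needs tightening is your justification of properness, specifically the claim that $\tilde\Psi_i(s)>-\infty$ for every feasible $s$. Saying this ``follows from \dots\ the boundedness of the optimal set'' is not quite an argument: the optimal set of \eqref{eq:barrierprob} is the wrong object (it fixes the right-hand side at $b$, not at an arbitrary $s$), and invoking the optimal set of the parametric problem itself is circular. A clean fix is a recession-cone argument using Assumption~\ref{asm:prob}(c): any recession direction $(d_j)_{j\in\mathcal{N}_i\cup\{i\}}$ of the feasible set of $\tilde\Psi_i(s)$ extends, by padding with zeros for $j\notin\mathcal{N}_i\cup\{i\}$, to a recession direction of the feasible set of \eqref{eq:prob}, which is trivial by compactness. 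Hence the feasible set of $\tilde\Psi_i(s)$ is bounded for every $s$, and Lemma~\ref{lemma:solutionexistence} then gives attainment and finiteness of the infimum. With this in place, the effective domain of $\tilde\Psi_i$ coincides with the (open) feasibility region you described, and the rest of your argument goes through.
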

\begin{proof}
    We refer to \cite{Wu21a} due to space limitations.
\end{proof}

Let $r_i(y_{\mathcal{N}_i\cup\{i\}}\!)\!=\!\!(\sum_{j\in\mathcal{N}_i\cup\{i\}}\!\phi_j(y_j))\!-\!\Psi_i(y_{\mathcal{N}_i\cup\{i\}}\!) \forall i\in\mathcal{V}$.
\begin{lemma}\label{lemma:optconditionofproby}
	Suppose that all the conditions in Lemma \ref{lemma:welldefinedalg} hold. If $r_i(y_{\mathcal{N}_i\cup\{i\}}^k)=0$
	$\forall i\in\mathcal{V}$, then $\{y_i^k\}_{i\in\mathcal{V}}$ is optimal to problem \eqref{eq:probleminyz}.
\end{lemma}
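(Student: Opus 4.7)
The plan is to show that under the hypothesis $r_i(y_{\mathcal{N}_i\cup\{i\}}^k)=0$ for all $i\in\mathcal{V}$, the iterate $\{y_i^k\}_{i\in\mathcal{V}}$ satisfies the first-order optimality (KKT) conditions of the convex problem \eqref{eq:probleminyz}, from which optimality follows by convexity.

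First I would fix an arbitrary $i\in\mathcal{V}$ and note that $r_i(y_{\mathcal{N}_i\cup\{i\}}^k)=0$ is exactly the identity \eqref{eq:identicalphi} needed to invoke Lemma \ref{lemma:equalgradient}. That lemma then gives $\nabla \phi_j(y_j^k)=\nabla \phi_\ell(y_\ell^k)$ for every pair $j,\ell\in\mathcal{N}_i\cup\{i\}$; in particular, the gradient is equal across every edge $\{i,j\}\in\mathcal{E}$. Since the communication graph $\mathcal{G}$ is connected, this edgewise equality propagates to a global equality: there exists a single vector $u^\star\in\mathbb{R}^m$ such that
\begin{equation*}
\nabla \phi_i(y_i^k)=u^\star, \quad \forall i\in\mathcal{V}.
\end{equation*}
Note that each $\phi_i$ is differentiable at $y_i^k$ by Lemma \ref{lemma:dualstronglyconcave}, since Lemma \ref{lemma:nonemptysolutionset}(i) guarantees that $\{y_i^k\}_{i\in\mathcal{V}}$ is feasible to \eqref{eq:probleminyz}.

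Next I would write down the KKT conditions of \eqref{eq:probleminyz}. The problem minimizes the separable sum $\sum_{i\in\mathcal{V}}\phi_i(y_i)$ subject to the single affine constraint $\sum_{i\in\mathcal{V}} y_i=b$. Each $\phi_i$ is convex (as the value function of the parametric convex problem \eqref{eq:subproblem}), and the constraint is affine, so KKT conditions are both necessary and sufficient for optimality. The Lagrangian is $L(\mathbf{y},\mu)=\sum_{i\in\mathcal{V}}\phi_i(y_i)+\mu^T\bigl(\sum_{i\in\mathcal{V}} y_i-b\bigr)$, and the stationarity condition reads $\nabla \phi_i(y_i)+\mu=\mathbf{0}$ for all $i\in\mathcal{V}$. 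Setting $\mu=-u^\star$ and combining with the feasibility of $\{y_i^k\}_{i\in\mathcal{V}}$ from Lemma \ref{lemma:nonemptysolutionset}(i), the KKT conditions hold at $\{y_i^k\}_{i\in\mathcal{V}}$, proving optimality.

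The main obstacle is the propagation step from edgewise gradient equality to global gradient equality, which I expect to handle by a simple induction along paths in the connected graph $\mathcal{G}$ (pick any two nodes, follow a path, apply Lemma \ref{lemma:equalgradient} at every intermediate neighborhood). Everything else is routine once convexity of $\phi_i$ and feasibility of $\{y_i^k\}_{i\in\mathcal{V}}$ are in hand from earlier lemmas.
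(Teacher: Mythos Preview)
Your proposal is correct and follows essentially the same route as the paper: invoke Lemma~\ref{lemma:equalgradient} to obtain gradient equality within each neighborhood, use connectedness of $\mathcal{G}$ to propagate this to a common gradient over all of $\mathcal{V}$, and combine with feasibility from Lemma~\ref{lemma:nonemptysolutionset}(i) to conclude optimality of $\{y_i^k\}_{i\in\mathcal{V}}$. The paper's proof is terser (it omits the explicit KKT verification you spell out), but the argument is the same.
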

\begin{proof}
By Lemma \ref{lemma:equalgradient}, $r_i(y_{\mathcal{N}_i\cup\{i\}}^k)=0$ means $\nabla \phi_{j}(y_{j}^k) = \nabla \phi_{\ell}(y_{\ell}^k)~\forall j,\ell\in\mathcal{N}_i\cup\{i\}$. Also, the graph $\mathcal{G}$ is connected. Therefore, $\nabla \phi_i(y_i^k)=\nabla \phi_{j}(y_{j}^k)~\forall i,j\in\mathcal{V}$. Combining this and the feasibility of $\{y_i^k\}_{i\in\mathcal{V}}$ to \eqref{eq:probleminyz} guaranteed by Lemma \ref{lemma:nonemptysolutionset}(i) yields the result.
\end{proof}

\subsubsection{Proof of (iii)}
We first prove that with probability $1$, $\lim\limits_{k\rightarrow \infty} \sum_{i\in\mathcal{V}} r_i(y_{\mathcal{N}_i\cup\{i\}}^k) = 0$. Let $\mathcal{F}^k=\{U_0,U_1,\ldots,U_{k-1}\}$ represent the selection of $U_t$ during $0\le t\le k-1$. By Assumption \ref{asm:coordinateselection} and Lemma \ref{lemma:optimalsolutiony}, $E\left[\sum_{i\in\mathcal{V}}\phi_i(y_i^{k+1})|\mathcal{F}^k\right]=\sum_{i\in\mathcal{V}}\phi_i(y_i^k)-E[\sum_{i\in U_k}r_i(y_{\mathcal{N}_i\cup\{i\}}^k)|\mathcal{F}^k]=\sum_{i\in\mathcal{V}}\phi_i(y_i^k)-\sum_{i\in\mathcal{V}} p_i^kr_i(y_{\mathcal{N}_i\cup\{i\}}^k)\le \sum_{i\in\mathcal{V}}\phi_i(y_i^k)-\sum_{i\in\mathcal{V}} \tilde{p}r_i(y_{\mathcal{N}_i\cup\{i\}}^k)$, which implies $E\left[\sum_{i\in\mathcal{V}}\phi_i(y_i^{k+1})\!-\!\Phi^\star|\mathcal{F}^k\right]\le\sum_{i\in\mathcal{V}}\phi_i(y_i^k)$ $-\Phi^\star-\sum_{i\in\mathcal{V}} \tilde{p}r_i(y_{\mathcal{N}_i\cup\{i\}}^k)$. Since $\{y_i^k\}_{i\in\mathcal{V}}$ is feasible to problem \eqref{eq:probleminyz} by Lemma \ref{lemma:nonemptysolutionset}(i), $\sum_{i\in\mathcal{V}}\phi_i(y_i^k)-\Phi^\star$ is non-negative. Also, $\sum_{i\in\mathcal{V}} \tilde{p}r_i(y_{\mathcal{N}_i\cup\{i\}}^k)$ is non-negative. Then, by the super-martingale convergence theorem \cite[Lemma 2.2]{Bertsekas11}, $\lim_{k\rightarrow \infty} \sum_{i\in\mathcal{V}} r_i(y_{\mathcal{N}_i\cup\{i\}}^k) = 0$ with probability $1$.

Next, we derive the convergence of $\sum_{i\in\mathcal{V}} \phi_i(y_i^k)$, provided that $\lim_{k\rightarrow \infty} \sum_{i\in\mathcal{V}} r_i(y_{\mathcal{N}_i\cup\{i\}}^k) = 0$ which occurs with probability $1$ by the above derivation. Because of Lemma \ref{lemma:optimalsolutiony} and $y_i^{k+1}=y_i^k$ $\forall i\in \cup_{j\in U_k}(\mathcal{N}_j\cup\{j\})$, $\sum_{i\in\mathcal{V}} \phi_i(y_i^k)$ is monotonically non-increasing and therefore, $\sum_{i\in\mathcal{V}} \phi_i(y_i^k)\le \sum_{i\in\mathcal{V}} \phi_i(y_i^0)$. In addition, $\{y_i^k\}_{i\in\mathcal{V}}$ is feasible to problem \eqref{eq:probleminyz}. Then, by Lemma \ref{lemma:boundedlevelsetproby}, the sequence $\{y_i^k\}_{i\in\mathcal{V}}$, $k\ge 0$ is bounded. By Bolzano-Weierstrass theorem, there exists a convergent sequence $\{y_i^k\}_{i\in\mathcal{V}}$, $k\in\mathcal{K}$, where $\mathcal{K}$ is a subset of the set of non-negative integers. Let $\{\tilde{y}_i\}_{i\in\mathcal{V}}$ be the limit of the sequence $\{y_i^k\}_{i\in\mathcal{V}}$, $k\in\mathcal{K}$. Because each $r_i(y_{\mathcal{N}_i\cup\{i\}}^k)$ is a continuous function on $\{y_i^k\}_{i\in\mathcal{N}_i\cup\{i\}}$ according to Lemma \ref{lemma:psicontinuity} and $\lim_{k\rightarrow \infty} \sum_{i\in\mathcal{V}} r_i(y_{\mathcal{N}_i\cup\{i\}}^k)=0$, if we let $y_i^k=\tilde{y}_i$ $\forall i\in\mathcal{V}$, then $r_i(y_{\mathcal{N}_i\cup\{i\}}^k)=0$ $\forall i\in\mathcal{V}$. This implies $\sum_{i\in\mathcal{V}} \phi_i(\tilde{y}_i)=\Phi^\star$ by Lemma \ref{lemma:optconditionofproby}. In addition, $\sum_{i\in\mathcal{V}} \phi_i(y_i^k)$ is monotonically non-increasing. Therefore, $\lim_{k\rightarrow \infty} \sum_{i\in\mathcal{V}} \phi_i(y_i^k) = \Phi^\star$.

To conclude, $\lim\limits_{k\rightarrow \infty} \sum_{i\in\mathcal{V}} \phi_i(y_i^k) = \Phi^\star$ with probability $1$.

\subsection{Proof of Lemma \ref{lemma:equaloptvalue}}\label{ssec:proofoflemmaequivalence}

Result (i) can be derived from assertions (a) and (b) in Appendix \ref{sssec:part3proofoflemmanonempty}) with $y_i=y_i^\star$, while (ii) follows from \eqref{eq:identicalFandphi}.
To prove result (iii), note that each $x_i^k$ is feasible to problem \eqref{eq:subproblem} with $y_i=y_i^k$. In addition, by Lemma \ref{lemma:nonemptysolutionset} and Lemma \ref{lemma:feasibilityofy}, $\{x_i^k\}_{i\in\mathcal{V}}$ is feasible to problem \eqref{eq:barrierprob}. It is straightforward to see that every feasible solution of \eqref{eq:barrierprob} is also feasible to \eqref{eq:prob}.

\subsection{Proof of Lemma \ref{lemma:optimality}}\label{ssec:proofofthmasymp}

We first provide an upper bound for $F^\star$. Since the feasible set of problem \eqref{eq:prob} is bounded and each $f_i$ is continuous, there exists a compact set $\mathcal{X}_\epsilon$ that satisfies: (a) $\mathcal{X}_\epsilon$ is a subset of the feasible set of problem \eqref{eq:barrierprob}; and (b) there exists $\mathbf{x}'=[(x_1')^T, \ldots, (x_n')^T]^T\in \mathcal{X}_\epsilon$ such that $\sum_{i\in\mathcal{V}} f_i(x_i')\le f^\star+\epsilon/2$. Since $\mathcal{X}_\epsilon$ is compact and each $B_i^j$, $i\in\mathcal{V}$, $j=1,\ldots,p_i$ is continuous, there exists an upper bound $\bar{B}>0$ of $\sum_{i\in\mathcal{V}}\sum_{j=1}^{p_i} B_i^j(x_i)$ for all $\mathbf{x}=[(x_1)^T, \ldots, (x_n)^T]^T\in\mathcal{X}_{\epsilon}$. Therefore, $\sum_{i\in\mathcal{V}} F_i(x_i')=\sum_{i\in\mathcal{V}} (f_i(x_i')+c\sum_{j=1}^{p_i} B_i^j(x_i'))\le \sum_{i\in\mathcal{V}} f_i(x_i')+c\bar{B}\le f^\star+\epsilon/2+c\bar{B}$. In particular, 
%Then, due to $F^\star\le \sum_{i\in\mathcal{V}} F_i(x_i')$, we have 
$F^\star\le f^\star+c\bar{B}+\epsilon/2$.

Next, we derive a lower bound for $\sum_{i\in\mathcal{V}} F_i(x_i^k)$. Since the feasible set of problem \eqref{eq:barrierprob} is bounded due to Lemma \ref{lemma:lemmaertyofbarrierobj} and $B_i^j(x_i)$ is continuous and goes to $+\infty$ when $g_i^j(x_i)$ goes to $0$ from negative values, $\sum_{i\in\mathcal{V}} \sum_{j=1}^{p_i} B_i^j(x_i)$ is lower bounded by some $\underline{B}$ in the feasible set of problem \eqref{eq:barrierprob}. Thus, $\sum_{i\in\mathcal{V}} F_i(x_i^k)=\sum_{i\in\mathcal{V}} (f_i(x_i^k)+c\sum_{j=1}^{p_i} B_i^j(x_i^k))\ge \sum_{i\in\mathcal{V}} f_i(x_i^k)+c\underline{B}$.

%Letting $\bar{c}=\epsilon/(2\bar{B}-2\underline{B})$ in the bounds derived in the paragraphs above yields the %desired result.
The desired result now follows with $\bar{c}=\epsilon/(2\bar{B}-2\underline{B})$ and $0<c\le \bar{c}$ in the upper and lower bound derived above.
\bibliographystyle{IEEEtran}
\bibliography{reference}

\end{document}